\newtheorem{Definition}{Definition}[section]
\newtheorem{Theorem}[Definition]{Theorem}
\newtheorem{Lemma}[Definition]{Lemma}
\newtheorem{Corollary}[Definition]{Corollary}
\newcommand{\lc}{\mathcal{L}}
\newcommand{\rc}{\mathcal{R}}
\newcommand{\hc}{\mathcal{H}}
\newcommand{\jc}{\mathcal{J}}
\title{\Large \bf On right $\pi$-inverse ordered semigroups}
\author{A. Jamadar \\
\footnotesize{Department of Mathematics, Rampurhat College}\\
\footnotesize{Rampurhat-731224, West Bengal, India}\\
\footnotesize{amlanjamadar@gmail.com}}
\begin{document}
\date{}
\maketitle

\begin{abstract}{\footnotesize}
Here we introduce the notion of (left, right) $\pi$-$t$-simple,
right $\pi$-inverse ordered semigroups and discuss
characterizations and relationships concerning them. Semilattice
decomposition of left $\pi$-$t$-simple ordered semigroups has been
given here. Furthermore we study an interrelation of between the
generalized Green's relations and the class of semigroups which
are semilattices of right $\pi$-$t$-simple ordered semigroups.

 \end{abstract}
{\it Key Words and phrases:} {$l$-Archimedean, left $\pi$-regular,
 nil-extension, ordered idempotent, left $\pi$-$t$-simple ordered
semigroup, right $\pi$-inverse ordered semigroup}.
\\{\it 2010 Mathematics subject Classification:} 20M10; 06F05.

\section{Introduction and Preliminaries}
A semigroup  $(S,\cdot)$ with an order relation $\leq$ is called
an ordered semigroup ($\cite{Ke2006}$) if for all $a,\;b , \;x \in
S, \;a \leq b$ implies $xa\leq xb \;\textrm{and} \;a x \leq b x $.
It is denoted by $(S,\cdot, \leq)$. Let $(S,\cdot, \leq)$ be an
ordered semigroup. For a subset $A$ of $S$, let $(A]= \{x\in S:
x\leq a, \;\textrm{for some} \;a\in A\}$.

An element $a$ of $S$ is said to be regular (completely regular)
$\cite{ke92}$ if there $x \in S$ such that $a\leq axa \; (a\leq
a^2xa^2)$. $S$ is called a regular (completely regular) ordered
semigroup if every element of $S$ is regular (completely regular).
Note that $S$ is regular (completely regular) if and only if $a\in
(aSa]\; (a\in (a^2Sa^2])$ for all $a\in S$. The set of regular and
completely regular elements in an ordered semigroup $S$ we denote
by $Reg_{\leq}(S)\;\textrm{and} \; Gr_{\leq}(S)$ respectively. $S$
is called intra-regular if for every $a \in S, \;a \in (S a^2 S]$.
The set of all intra-regular elements in ordered semigroup $S$ we
denote by $Intra_\leq (S)$.

 An element $b \in S$ is ordered inverse $\cite{HJ}$ of $a$ if $a \leq aba$ and
$b \leq bab$.  The set of all ordered inverses of an element $a
\in S$ is denoted by $V_\leq(a)$. Throughout the paper $a', a''$
are the ordered inverses of $a$ unless otherwise stated.

An element $e\in S$ is said to be ordered idempotent $\cite{bh1}$
if $e\leq e^2$. The set of all ordered idempotents of $S$ is
denoted by $E_\leq (S)$.

Cao and Xinzhai $\cite{Cao 2000}$ studied ordered semigroups which
are nil-extensions of  $t$-simple ordered semigroups. Sadhya and
Hansda $\cite{SH}$ studied these semigroups under the name of
$\pi$-$t$-simple ordered semigroups. In fact class of these
ordered semigroups are natural generalization of $\pi$-groups.
Jamadar and Hansda $\cite{JH}$ studied ordered semigroup in which
any two inverses of an element are $\rc$-related. Class of these
ordered semigroups are natural generalization of class of right
inverse semigroups. Hansda and Jamadar $\cite{JH}$ named these
ordered semigroups as right inverse ordered semigroups and studied
their different aspects. In this paper an extensive study of
$\pi$-$t$-simple and right inverse ordered semigroups has been
done. Our approach allows one to see an interplay between these
two ordered semigroups.

A nonempty subset $A$ of $S$ is  called a left (right) ideal
$\cite{Ke2006}$ of $S$, if $SA \subseteq A \;( A S \subseteq A)$
and $(A]= A$. A nonempty subset $A$ is called a (two-sided) ideal
of $S$ if it is both a left and a right ideal of $S$.

The principal $\cite{Ke2006}$ left ideal, right ideal, ideal and
bi-ideal $\cite{ke92}$ generated by $a \in S$ are denoted by
$L(a), \;R(a), \;I(a)$and $B(a)$ respectively and defined by
$$L(a)= (a \cup Sa], \;R(a)= (a\cup aS], \;I(a)= (a\cup Sa \cup aS
\cup SaS] \;  and \;B(a)=(a \cup aSa].$$

Kehayopulu \cite{Ke2006} defined Greens relations $\lc, \;\rc,
\;\jc \;\textrm{and} \;\hc$ on an ordered semigroup $S$ as
follows:

$$ a \lc b   \; if   \;L(a)= L(b), \; a \rc b   \; if   \;R(a)=
R(b),\;
 a \jc b   \; if   \;I(a)= I(b) \;\textrm{and} \;\hc= \;\lc \cap \;\rc.$$

These four relations  are equivalence relations on $S$.

An ordered semigroup $S$ is called $\pi$-regular (resp. completely
$\pi$-regular) $\cite{Cao 2000}$ if for every $ a\in S$ there is
$m \in \mathbb{N}$ such that $a^{m} \in (a^{m}Sa^{m}] \;
(\textrm{resp}. \;a^{m} \in (a^{2m}Sa^{2m}]).$ The set of
$\pi$-regular elements in an ordered semigroup $S$ is denoted by
$\pi Reg_{\leq}(S)$. An ordered semigroup $S$ is called left
 $\pi$-regular (resp. right $\pi$-regular) $\cite{Cao
2000}$ if for every $ a\in S$ there is $m \in \mathbb{N}$ such
that $a^{m} \in (Sa^{2m}] \; (\textrm{resp}. \;a^{m} \in
(a^{2m}S]).$

An ordered semigroup $S$ is said to be weakly commutative
$\cite{ke 2008}$ if for all $a, b \in S$, $(ab)^{n} \in (bSa] \;
\textrm{for some} \; n \in \mathbb{N}$. $S$ is right (left) weakly
commutative $\cite{ke 2008}$ if for every $a, b \in S$ there
exists $n \in \mathbb{N}$ such that $(ab)^n\in (Sa]$ ($(ab)^n\in
(bS]$).

A band is a semigroup in which every element is an idempotent. A
commutative band is called a semilattice. An ordered semigroup $S$
is said to be Archimedean  $\cite{ke 2008}$ if for every $a, b \in
S$ there exists $n \in \mathbb{N}$ such that $a^{n} \in (SbS]$. An
ordered semigroup $S$ is said to be left (right) Archimedean
$\cite{ke 2008}$ if for every $a,b \in S$ there exists $n \in
\mathbb{N}$ such that $a^{n} \in (Sb] \; (a^{n} \in (bS])$.

In an ordered semigroup $S$, an equivalence relation $\rho$ is
called left (right) congruence if for $a, b, c \in S \;a \;\rho \;b
\; \textrm{implies} \;ca \;\rho \;cb \;(ac \;\rho \;bc)$. $\rho$ is
congruence if it is both left and right congruence. A congruence
$\rho$ on $S$ is called semilattice if for all $a, b \in S \;a
\;\rho \;a^{2} \;\textrm{and} \;ab \;\rho \;ba$. A semilattice
congruence $\rho$ on $S$ is called complete if $a \leq b$ implies
$(a,ab)\in \rho$. The ordered semigroup $S$  is called complete
semilattice of subsemigroup of type $\tau$ if there exists a
complete semilattice congruence $\rho $ such that $(x)_{\rho}$ is a
type $\tau$ subsemigroup of $S$. Equivalently: There exists a
semilattice $Y$ and a family of subsemigroups $\{S\}_{\alpha \in Y}$
of type $\tau$ of $S$ such that:
\begin{enumerate}
\item \vspace{-.4cm} $S_{\alpha}\cap S_{\beta}= \;\phi$ for any
$\alpha, \;\beta \in \;Y \;with \; \alpha \neq \beta,$ \item
\vspace{-.4cm} $S=\bigcup _{\alpha \;\in \;Y} \;S_{\alpha},$ \item
\vspace{-.4cm} $S_{\alpha}S_{\beta} \;\subseteq \;S_{\alpha
\;\beta}$ for any $\alpha, \;\beta \in \;Y,$ \item \vspace{-.4cm}
$S_{\beta}\cap (S_{\alpha}]\neq \phi$ implies $\beta \;\preceq
\;\alpha,$ where $\preceq$ is the order of the semilattice $Y$
defined by \\$\preceq:=\{(\alpha,\;\beta)\;\mid
\;\alpha=\alpha\;\beta\;(\beta\;\alpha)\}$ $\cite{ke 2008}$.
\end{enumerate}

Let $S$ be an ordered semigroup and $\rho$ be an equivalence
relation on $S$. Following Hansda and Jamadar $\cite{HJ}$, an
element $a\in S$ of type $\tau$ is said to be $\rho$-unique
element in $S$ if for every other element $b\in S$ of type $\tau$
we have $a\rho b$.

For the sake of convenience of general reader we state following
results.

\begin{Corollary}\cite{Cao 2000}\label{1}
The following conditions are equivalent on a po-semigroup $S$:
\begin{enumerate}
\item\vspace{-.4cm}
$S$ is a nil-extension of a left simple
 $\pi$-regular po-semigroup;

\item\vspace{-.4cm}

($\forall a,b \in S$)( $\exists m \in
\mathbb{Z^+}$)  $a^m \in (a^mSb^n]$ for every $n \in
\mathbb{Z^+}$;

\item\vspace{-.4cm}

($\forall a,b \in S$)( $\exists m \in \mathbb{Z^+}$)  $a^m \in
(a^mSb]$;

\item\vspace{-.4cm}

($\forall a,b \in S$)( $\exists m \in \mathbb{Z^+}$)  $a^m \in
(a^mSb^{m}]$;

 \item\vspace{-.4cm}
 $S$ is a l-Archimedean
$\pi$-regular po-semigroup.

\end{enumerate}
\end{Corollary}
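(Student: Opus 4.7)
The plan is to establish the equivalence via the cycle $(1) \Rightarrow (2) \Rightarrow (3) \Rightarrow (4) \Rightarrow (5) \Rightarrow (1)$. Three of these implications are essentially formal: $(2) \Rightarrow (3)$ is the specialisation $n = 1$; $(4) \Rightarrow (5)$ is immediate, since setting $b = a$ in (4) gives $a^m \in (a^m S a^m]$ ($\pi$-regularity), and rewriting $a^m s b^m$ as $(a^m s b^{m-1})\cdot b$ exhibits $a^m$ as an element of $(Sb]$ (l-Archimedean); and $(3) \Rightarrow (4)$ is a short bootstrap. For this last step, I would apply (3) to $(a,b)$ to find $m_1$ and $s_1 \in S$ with $a^{m_1} \leq a^{m_1} s_1 b$, and right-iterate to get $a^{m_1} \leq a^{m_1}(s_1 b)^p$ for every $p \geq 1$. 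Applying (3) once more, now to the pair $(s_1 b,\, b^{m_1})$, yields some $p$ and $v \in S$ with $(s_1 b)^p \leq (s_1 b)^p v b^{m_1}$, and substituting produces $a^{m_1} \in (a^{m_1} S b^{m_1}]$, as desired.

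For $(1) \Rightarrow (2)$, let $K$ denote the kernel of the given nil-extension, so $K$ is a left-simple, $\pi$-regular two-sided ideal of $S$ in which every element of $S$ has a power. Fix $a, b \in S$. Choose $m$ so that $a^m \in K$, and enlarge $m$ to a multiple if necessary so that $a^m$ is ordered-regular in $K$, i.e.\ $a^m \leq a^m x a^m$ for some $x \in K$; this is possible by $\pi$-regularity of $K$. Given $n \in \mathbb{Z}^+$, pick $j$ with $b^{nj} \in K$. Left simplicity of $K$ forces $a^m \in (K b^{nj}]$, so $a^m \leq z b^{nj}$ for some $z \in K$. Combining, $a^m \leq a^m x a^m \leq a^m x z b^{nj}$; since $nj \geq n$, the trailing factor $b^n$ peels off to conclude $a^m \in (a^m S b^n]$, which is (2).

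The main obstacle will be $(5) \Rightarrow (1)$, the structural direction. One must construct the kernel of the desired nil-extension; the natural candidate is the set of (completely) ordered-regular elements of $S$. Non-emptiness, the nil property of the complement, and the $\pi$-regularity of this candidate all follow directly from the $\pi$-regularity of $S$. The delicate points will be verifying that the candidate is closed under products, is a two-sided ideal of $S$, and is left simple as an ordered subsemigroup. These must be extracted by combining the regularity inequality $x \leq xyx$ with the l-Archimedean hypothesis applied to auxiliary pairs, and, in the absence of cancellation, by absorbing extra right-factors through iterated inequalities in the same spirit as the $(3) \Rightarrow (4)$ bootstrap above. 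This is where the bulk of the original Cao--Xinzhai argument is concentrated.
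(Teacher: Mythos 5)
The paper never proves this statement at all: it is quoted from Cao and Xinzhai \cite{Cao 2000} purely for the reader's convenience, so there is no internal proof to compare yours against. Judged on its own terms, your cycle is sound for four of the five arrows. The specialisation $(2)\Rightarrow(3)$, the bootstrap $(3)\Rightarrow(4)$ (iterating $a^{m_1}\leq a^{m_1}(s_1b)^p$ and then applying (3) to the pair $(s_1b,\,b^{m_1})$), the reading-off of $(4)\Rightarrow(5)$, and the kernel argument for $(1)\Rightarrow(2)$ (choosing $m$ with $a^m\in K$ ordered-regular, invoking left simplicity in the form $K=(Kb^{nj}]$, and peeling off a trailing $b^n$) are all correct; in particular you rightly fix $m$ before $n$ is quantified, as the statement of (2) requires. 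Your $(3)\Rightarrow(4)$ device is, incidentally, exactly the one the author uses later for $(4)\Rightarrow(5)$ of Theorem \ref{2}.

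The genuine gap is $(5)\Rightarrow(1)$, the only implication with real structural content, and you do not prove it: you name a candidate kernel (the set of ordered-regular elements), list the properties it would need (closure under products, two-sided ideal, left simplicity), and then assert that these ``must be extracted'' from the hypotheses. That is a plan, not an argument, and it is precisely where the difficulty sits. In the ordered setting $Reg_{\leq}(S)$ is not obviously a subsemigroup, and left simplicity of the kernel --- $(Ka]=K$ for every $a\in K$, with the closure taken inside $K$ --- requires actual construction: from $l$-Archimedeanity you only get $a^n\leq sb$ for some $n$ and some $s\in S$, so you must use the regularity inequality $a\leq axa$ to kill the exponent $n$, and you must also show the witnessing factor can be taken in $K$ rather than merely in $S$. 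Until those verifications are written out (this is the bulk of the Cao--Xinzhai proof you defer to), the chain of implications does not close and the equivalence is not established.
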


Let $S$ be a $\pi$-regular ordered semigroup. Due to Sadhya and
Hansda \cite{SH1}, the following equivalence relations $\lc^*$,
$\rc^*$, $\jc^*$ and $\hc^*$ are defined by:

\begin{enumerate}
\item\vspace{-.4cm}
 $a\lc^* b\Leftrightarrow a^m\lc b^n$
\item\vspace{-.4cm}
$a\rc^* b\Leftrightarrow a^m\rc b^n$
\item\vspace{-.4cm}
$a\jc^* b\Leftrightarrow a^m\jc b^n$
\item\vspace{-.4cm}
$\hc^*= \lc^*\cap \rc^*$
\end{enumerate}

where $a,b\in S$ and $m, n$ are the smallest positive integers
such that $a^m, b^n\in Reg_\leq (S)$.

\begin{Lemma}\cite{AJ}\label{3}
Let $S$ be a $\pi$-regular ordered semigroup and $a\in S$. Then
there is $m \in \mathbb{N}$ such that $(Sa^m]$ is generated by an
ordered idempotent.
\end{Lemma}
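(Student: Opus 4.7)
The plan is to use $\pi$-regularity to extract the right witness and then let the ordered idempotent be a shifted product involving it. Since $S$ is $\pi$-regular, for the given $a\in S$ there exist $m\in\mathbb{N}$ and $x\in S$ with $a^m\le a^m x a^m$. I would then define
\[
e:=xa^m
\]
and claim that $e$ is an ordered idempotent and that $(Sa^m]$ coincides with the principal left ideal $L(e)=(e\cup Se]$ generated by $e$.

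First I would check that $e\in E_\leq(S)$. Multiplying the inequality $a^m\le a^m x a^m$ on the left by $x$ yields $xa^m\le xa^m x a^m=(xa^m)(xa^m)$, i.e.\ $e\le e^2$, so $e$ is an ordered idempotent by the definition recalled from \cite{bh1}. This step is short but it is the structural fact that motivates the choice of $e$; any other natural candidate (like $a^m x$) would work similarly on the right side, but for a left-ideal statement $xa^m$ is the correct one.

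Next I would prove the two inclusions separately. The inclusion $L(e)\subseteq (Sa^m]$ is routine: $e=xa^m\in Sa^m$ and $Se=Sxa^m\subseteq Sa^m$, so $e\cup Se\subseteq Sa^m$, whence passing to downward closures gives $L(e)\subseteq (Sa^m]$. The reverse inclusion $(Sa^m]\subseteq L(e)$ is where the $\pi$-regularity assumption actually does work: for $y\in (Sa^m]$ pick $s\in S$ with $y\le sa^m$, and then use $a^m\le a^m x a^m$ to write
\[
y\le sa^m\le s a^m x a^m=(sa^m)(xa^m)\in S e,
\]
so $y\in (Se]\subseteq L(e)$.

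The whole argument is essentially a short computation once the correct $e$ is identified; there is no serious obstacle. The only subtlety is recognizing that $\pi$-regularity already furnishes the factor $x$ required simultaneously to make $xa^m$ an ordered idempotent and to absorb arbitrary elements of $(Sa^m]$ into $(Se]$. No auxiliary results beyond the definitions and the statement of $\pi$-regularity need to be invoked.
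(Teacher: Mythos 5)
Your argument is correct: $e=xa^m$ satisfies $e\le e^2$ by left-multiplying $a^m\le a^mxa^m$ by $x$, and the two inclusions you give do establish $(Sa^m]=(Se]=L(e)$, which is exactly the sense of ``generated by an ordered idempotent'' used later in the paper (e.g.\ in the proof of Theorem \ref{5}). The paper itself only cites this lemma from \cite{AJ} without reproducing a proof, and your computation is the standard argument one would expect there.
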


\begin{Theorem}\cite{JH}\label{51}
The following conditions are equivalent on a regular ordered
semigroup $S$.
\begin{enumerate}
\item\vspace{-.4cm} $S$ is right inverse; \item\vspace{-.4cm} for
every $a\in S$ and $a', a'' \in V_{\leq}(a)$, $a'\rc a''$;
\item\vspace{-.4cm} for every $e,f\in E_{\leq}(S)$, $ef\in
(fSeSf]$; \item\vspace{-.4cm} for every $e,f\in E_{\leq}(S)$,
$(eS] \cap (fS]=(efS]$; \item\vspace{-.4cm} for $e\in E_{\leq}(S)$
and $x\in (Se]$\textrm{ implies} $x'\in (eS]$, where $x\in S$ and
$x'\in V_{\leq}(x)$.
\end{enumerate}
\end{Theorem}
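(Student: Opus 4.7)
My plan is to establish the cyclic chain
\[(1)\Longleftrightarrow(2)\Longrightarrow(3)\Longrightarrow(4)\Longrightarrow(5)\Longrightarrow(2).\]
The equivalence $(1)\Leftrightarrow(2)$ is nothing but the definition of a right inverse ordered semigroup recalled in the introduction, so no work is required there.

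For $(2)\Rightarrow(3)$, I would fix $e,f\in E_\leq(S)$ and use the regularity of $S$ to pick $x\in S$ with $fef\leq fef\cdot x\cdot fef$. The standard ordered-regularity calculation then gives one ordered inverse $\alpha=xfefx\in V_\leq(fef)$. Inserting factors of $e$ or $f$ in a different pattern (using $e\leq e^2$ and $f\leq f^2$) produces a second ordered inverse $\beta$ of $fef$ that is syntactically distinct from $\alpha$. Hypothesis $(2)$ forces $\alpha\,\rc\,\beta$, and reading $R(\alpha)=R(\beta)$ off from the two explicit forms yields $ef\in(fSeSf]$.

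For $(3)\Rightarrow(4)$, the inclusion $(efS]\subseteq(eS]$ is trivial and $(3)$ gives $ef\in(fS]$, so $(efS]\subseteq(fS]$ as well. For the reverse, take $z\in(eS]\cap(fS]$ with $z\leq eu$, $z\leq fv$, and by regularity $z\leq zwz$. Then $z\leq (eu)w(fv)$, and inserting the factorisation of $ef$ provided by $(3)$ at the $\cdot w\cdot$ junction and regrouping allows one to bound $z$ by an element of $efS$. For $(5)\Rightarrow(2)$, take $a',a''\in V_\leq(a)$ and set $e=a'a\in E_\leq(S)$; then $a\leq aa'a=ae\in Se$, so $a\in(Se]$, and $(5)$ applied with $x=a$, $x'=a''$ gives $a''\in(eS]=(a'aS]\subseteq(a'S]\subseteq R(a')$. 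The symmetric argument with $e=a''a$ produces $a'\in R(a'')$, whence $R(a')=R(a'')$ and $a'\,\rc\,a''$.

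The remaining step $(4)\Rightarrow(5)$ is the technical heart of the theorem. Let $x\in(Se]$ with $x\leq se$ and $x'\in V_\leq(x)$; a direct check shows $xx',x'x\in E_\leq(S)$, and the identity $x'\leq(x'x)x'$ places $x'\in((x'x)S]$. The key is to use $(4)$ to extract an ordered analogue of the right-normal-band relation on $E_\leq(S)$, namely a bound of the form $e\cdot(x'x)\leq(x'x)\cdot t$ for some $t\in S$ (using that $x'x\leq x'se\in(Se]$), and then transport this control, via $x'\leq(x'x)x'$, into the required factorisation $x'\in(eS]$.

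\textbf{Main obstacle.} The hardest part is $(4)\Rightarrow(5)$: condition $(4)$ lives entirely among ordered idempotents, whereas $(5)$ demands a right-sided presentation of an arbitrary inverse $x'$. Bridging this gap requires first promoting $(4)$ to a usable ordered identity on $E_\leq(S)$, and then carefully transferring the resulting bound from $x'x$ back to $x'$ through the decomposition $x'\leq(x'x)x'$, respecting the downward closures throughout. The other implications, while requiring some bookkeeping of brackets, reduce to choosing the right inverses and idempotents.
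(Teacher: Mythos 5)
This statement is not proved in the paper at all: Theorem~\ref{51} is quoted verbatim from \cite{JH} in the preliminaries, so there is no in-paper proof to compare against. Judged on its own terms, your proposal is a plan rather than a proof, and two of its steps have genuine gaps. The serious one is $(2)\Rightarrow(3)$. You propose to manufacture two ordered inverses $\alpha,\beta$ of $fef$ and read the conclusion off $R(\alpha)=R(\beta)$, but $\beta$ is never specified, and, more fundamentally, $\rc$-relatedness of inverses of $fef$ gives bounds on $\alpha$ and $\beta$, not on $ef$. The target $ef\in(fSeSf]$ requires an upper bound for $ef$ \emph{beginning with} $f$, whereas every inequality for $ef$ that you can generate from $e\leq e^2$, $f\leq f^2$ and regularity (e.g.\ $ef\leq efxef$, $ef\leq e\cdot fef$) begins with $e$; passing through $fef$ does not cross that barrier, since $ef$ and $fef$ are not comparable and the only available link, $ef\leq e(fef)$, again starts with $e$. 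The device that actually works — and which the paper's own Theorem~\ref{5}, $(2)\Rightarrow(3)$, Case~2 exhibits in the $\pi$-setting — is to take $x\in V_\leq(ef)$, check that $ef$ is itself an ordered inverse of $fxe$, produce a \emph{second} ordered inverse of $fxe$ whose leading letter is $f$, and only then invoke condition $(2)$ to get $ef\leq f(\cdots)$, finishing with $ef\leq ef\,x\,ef$ to restore the trailing $f$. That idea is absent from your sketch.

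The second gap is $(4)\Rightarrow(5)$, which you yourself identify as the technical heart: what you give is a description of what a proof would have to do (extract an ordered right-normal-band-type inequality $e(x'x)\leq(x'x)t$ from $(4)$ and transport it through $x'\leq(x'x)x'$), not an argument that it can be done. Similarly, in $(3)\Rightarrow(4)$ the inclusion $(eS]\cap(fS]\subseteq(efS]$ is waved at ("inserting the factorisation of $ef$ at the $\cdot w\cdot$ junction"), but the chain $z\leq(eu)w(fv)$ contains no occurrence of $ef$ to factor, and $(3)$ only bounds $ef$ from above, which cannot be substituted into a lower-bound position; this step needs the idempotents $zz'$, $z'z$ and further applications of $(3)$ to suitable idempotent pairs. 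By contrast, your $(1)\Leftrightarrow(2)$ (definitional) and $(5)\Rightarrow(2)$ (via $e=a'a$, $a\in(Se]$, hence $a''\in(a'S]$, and symmetrically) are correct and complete.
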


\section{Left $\pi$-$t$-simple ordered semigroup}

This section deals with the characterizations of left
$\pi$-$t$-simple ordered semigroups.

\begin{Theorem}\label{7}
A regular ordered semigroup $S$ is a left simple and $\pi$-regular
ordered semigroup if and only if $e \lc^* f$ for all $e, f \in
E_{\leq}(S)$.
\end{Theorem}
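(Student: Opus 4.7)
The plan is to handle both directions using one simple observation: an ordered idempotent $e$ satisfies $e \leq e^2 \leq e\cdot e\cdot e$, so $e \in Reg_\leq(S)$. Hence for any $e, f \in E_\leq(S)$ the smallest positive integers with $e^m, f^n \in Reg_\leq(S)$ are $m=n=1$, and consequently the relation $e \lc^* f$ collapses to $L(e) = L(f)$. With this reduction in hand, both directions are short.

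For the forward direction, suppose $S$ is regular, left simple, and $\pi$-regular. Pick $e, f \in E_\leq(S)$. By the reduction above, it suffices to show $L(e) = L(f)$. Left simplicity forces $(Sa] = S$ for every $a \in S$, so in particular $L(e) = (e \cup Se] = S = (f \cup Sf] = L(f)$, which yields $e \lc^* f$.

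For the converse, assume $S$ is regular and $e \lc^* f$ for all $e, f \in E_\leq(S)$. Regularity already gives $\pi$-regularity (with $m = 1$), so it remains only to prove left simplicity, i.e.\ that $(Sb] = S$ for every $b \in S$. Fix $a, b \in S$; by regularity choose $x, y \in S$ with $a \leq axa$ and $b \leq byb$. Multiplying the first inequality on the left by $x$ gives $xa \leq xaxa$, so $xa \in E_\leq(S)$; similarly $yb \in E_\leq(S)$. The hypothesis then yields $xa \lc^* yb$, which (since both elements are already regular) means $L(xa) = L(yb)$. Now $a \leq axa = a(xa) \in S(xa)$ places $a$ in $L(xa)$, while $L(yb) = (yb \cup Syb] \subseteq (Sb]$. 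Therefore $a \in (Sb]$, and since $a$ was arbitrary we conclude $(Sb] = S$.

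The only step that is not a direct unpacking of definitions is the manufacturing of the ordered idempotents $xa$ and $yb$ from the regularity of $a$ and $b$, and the recognition that this is the right pair of idempotents to feed into the $\lc^*$-hypothesis in order to transport the element $a$ into the principal left ideal generated by $b$. Once that construction is in place, the chain of inclusions through $L(xa) = L(yb) \subseteq (Sb]$ is routine, so I expect no genuine obstacle beyond spotting that trick.
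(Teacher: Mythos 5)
Your proposal is correct, and it is worth noting that the paper itself gives no argument here (its ``proof'' is the single line ``This follows immediately''), so there is nothing to compare against except the intended reading of the statement. Your reduction is the right one: since an ordered idempotent satisfies $e \leq e^2 \leq e^3$, every idempotent lies in $Reg_\leq(S)$, so on $E_\leq(S)$ the relation $\lc^*$ collapses to $\lc$, i.e.\ to $L(e)=L(f)$. The forward direction is then immediate from $(Sa]=S$ in a left simple ordered semigroup, and your converse correctly manufactures the idempotents $xa$ and $yb$ from $a\leq axa$, $b\leq byb$ and transports $a$ through $a\in L(xa)=L(yb)\subseteq (Sb]$ to get $(Sb]=S$. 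One small point you could make explicit: the hypothesis that $S$ is regular is what guarantees $\lc^*$ is even defined (the paper only defines $\lc^*$ on $\pi$-regular ordered semigroups), and regularity trivially gives $\pi$-regularity, so the ``$\pi$-regular'' clause in the statement is redundant --- your proof handles this correctly but silently. In short, your write-up supplies the details the paper omits, with no gaps.
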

\begin{proof}
This follows immediately.
\end{proof}

\begin{Definition}
An ordered semigroup $S$ is said to be left $\pi$-$t$-simple
ordered semigroup if there exists a left simple and $\pi$-regular
ordered subsemigroup $H$ of $S$ with the property that for every
$a \in S$ there exists $m \in \mathbb{N}$ such that $a^{m} \in H$.
\end{Definition}
Some characterizations of a  left $\pi$-$t$-simple ordered
semigroup have been given in the following theorem.

\begin{Theorem}\label{2}
Let $S$ be an ordered semigroup. Then the following conditions are
equivalent:
\begin{enumerate}
\item\vspace{-.4cm}

$S$ is a left $\pi$-$t$-simple ordered semigroup;

\item\vspace{-.4cm}

$S$ is $\pi$-regular and contains an $\lc^*$-unique ordered
idempotent;

\item\vspace{-.4cm}

$S$ is $\pi$-regular and $a\lc^* b$ for every $a, b\in S$;

\item\vspace{-.4cm}

for every $a,b \in S$ there exists $m \in \mathbb{N}$ such that
$a^m \in (a^mSb]$;

\item\vspace{-.4cm}

for every $a,b \in S$ there exists $m \in \mathbb{N}$ such that
$a^m \in (a^mSb^n]$ for every $n \in \mathbb{N}$;

\item\vspace{-.4cm}

for every $a,b \in S$ there exists $m \in \mathbb{N}$ such that
$a^m \in (a^mSb^m]$;

\item\vspace{-.4cm}

$S$ is $\pi$-regular and $l$-Archimedean;

\item\vspace{-.4cm}

$S$ is a nil-extension of a left simple $\pi$-regular ordered
semigroup.

\end{enumerate}
\end{Theorem}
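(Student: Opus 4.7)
My plan is to exploit Corollary \ref{1}, which already supplies the equivalences (4) $\Leftrightarrow$ (5) $\Leftrightarrow$ (6) $\Leftrightarrow$ (7) $\Leftrightarrow$ (8). It remains to splice conditions (1), (2), (3) into this chain, which I will do via the short path (8) $\Rightarrow$ (1) $\Rightarrow$ (4) together with the equivalence (2) $\Leftrightarrow$ (3) and the implication (3) $\Rightarrow$ (4).

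For (8) $\Rightarrow$ (1), the left simple $\pi$-regular ideal $H$ of the nil-extension is itself a subsemigroup witnessing left $\pi$-$t$-simplicity, since every element of $S$ has a power in $H$ by hypothesis. For (1) $\Rightarrow$ (4), let $H$ be a left simple $\pi$-regular subsemigroup with the power-absorbing property of the definition, and fix $a, b \in S$ together with an exponent $k$ for which $a^k, b^k \in H$. Left simplicity of $H$ gives $a^k \in (b^k \cup Hb^k]$, so either $a^k \leq b^k$ or $a^k \leq h b^k$ for some $h \in H$. $\pi$-regularity of $H$ then furnishes $p$ with $a^{kp} \leq a^{kp} u a^{kp}$ for some $u \in H$; substituting the bound on $a^k$ into the trailing copy of $a^{kp}$ produces $a^{kp} \leq a^{kp} t b^k = a^{kp} (t b^{k-1}) b$, so $a^{kp} \in (a^{kp} S b]$, which is (4).

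For (3) $\Rightarrow$ (2), Lemma \ref{3} and $\pi$-regularity guarantee the existence of at least one ordered idempotent, and (3) forces every pair of ordered idempotents to be $\lc^*$-related, so any one of them is $\lc^*$-unique. For (2) $\Rightarrow$ (3), given $a, b \in S$, enlarge the indices $m, n$ so that $a^m, b^n$ are regular and Lemma \ref{3} applies, yielding ordered idempotents $e, f$ with $(Sa^m] = (Se]$ and $(Sb^n] = (Sf]$. Regularity then forces $a^m \in (Sa^m] = (Se] \subseteq L(e)$ and $e \in (Se] = (Sa^m] \subseteq L(a^m)$, so $a^m \lc e$, and similarly $b^n \lc f$. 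Since $e, f$ are ordered idempotents and hence regular, (2) reduces to $e \lc f$, and chaining delivers $a^m \lc b^n$, i.e., $a \lc^* b$. Finally for (3) $\Rightarrow$ (4): $a \lc^* b$ yields $L(a^m) = L(b^n)$ at the minimal regular exponents, so $a^m \in (b^n \cup Sb^n]$; substituting this bound into the regularity inequality $a^m \leq a^m u a^m$ replaces the trailing $a^m$ with a term ending in $b$ and gives $a^m \in (a^m S b]$.

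The principal obstacle will be the bookkeeping in (2) $\Rightarrow$ (3): the exponent returned by Lemma \ref{3} need not coincide with the minimal regular exponent appearing in the definition of $\lc^*$, so one must enlarge $m$ and $n$ uniformly so that both conditions hold, and then verify that the generating ordered idempotent $e$ of $(Sa^m]$ is genuinely $\lc$-related to $a^m$ in $S$. This last point rests on the identity $L(e) = (Se]$ coming from $e \leq e^2 \in Se$ together with the fact that $a^m \in (Sa^m]$ whenever $a^m$ is regular. Once these details are settled, the remaining implications are routine and the full cycle closes through Corollary \ref{1}.
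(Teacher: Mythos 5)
Your implication graph does not close: you establish that conditions $(1),(4),(5),(6),(7),(8)$ are mutually equivalent (via Corollary \ref{1} plus $(8)\Rightarrow(1)\Rightarrow(4)$), and that $(2)\Leftrightarrow(3)\Rightarrow(4)$, but nowhere do you derive $(2)$ or $(3)$ from any condition in the first group. As written, the proposal only shows that $(2)$ and $(3)$ are \emph{sufficient} for the rest, not that they are equivalent to it; a priori they could be strictly stronger. The paper closes exactly this hole with the implication $(1)\Rightarrow(2)$: for $e,f\in E_{\leq}(S)$ one takes powers $e^{m},f^{n}$ lying in the left simple subsemigroup $H$, uses left simplicity to get $e^{m}\leq xf^{n}$ and $f^{n}\leq ye^{m}$, and then $e\leq e^{m}\leq x(f^{n-1}f)\in (Sf]$ together with the symmetric inequality gives $e\;\lc\;f$, hence $e\;\lc^{*}f$ since ordered idempotents are regular at exponent one. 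You need this (or some other arrow into $\{(2),(3)\}$, e.g.\ $(8)\Rightarrow(2)$) to complete the theorem.

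A secondary, repairable issue: in your $(2)\Rightarrow(3)$ you route through Lemma \ref{3} at enlarged exponents, but $\lc^{*}$ is defined by comparing $a^{m}$ and $b^{n}$ at the \emph{smallest} exponents for which these powers are regular, so proving $a^{m}\lc b^{n}$ for larger $m,n$ does not by itself give $a\lc^{*}b$; you flag this as "bookkeeping" but do not resolve it. The paper's argument avoids the lemma entirely: taking $m,n$ minimal with $a^{m}\leq a^{m}sa^{m}$ and $b^{n}\leq b^{n}tb^{n}$, the elements $sa^{m},tb^{n}$ are already ordered idempotents $\lc$-related to $a^{m},b^{n}$ respectively, and $\lc^{*}$-uniqueness of idempotents then yields $a^{m}\lc b^{n}$ directly at the minimal exponents. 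Your remaining steps ($(8)\Rightarrow(1)$, $(1)\Rightarrow(4)$, $(3)\Rightarrow(2)$, $(3)\Rightarrow(4)$) are sound.
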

\begin{proof}
$(1)\Rightarrow(2)$: Let $a \in S$. Then there exist a left simple
and $\pi$-regular ordered subsemigroup $H$ of $S$ and $m \in
\mathbb{N}$ such that $a^m \in H$.  Since $H$ is a $\pi$-regular
ordered subsemigroup, so for $a^{m} \in H$ there exist $r \in
\mathbb{N}$ and $z\in H$ such that $(a^m)^r \leq (a^m)^rz(a^m)^r$,
that is $a^k\leq a^kza^k$, where $k= mr$. Hence $S$ is
$\pi$-regular.

Let $ e, f \in E_{\leq}(S)$. Then there are $m, n \in \mathbb{N}$
such that $e^{m}, f^{n} \in H$. Since $H$ is a left simple ordered
subsemigroup, $e^{m} \leq xf^{n}, \; f^{n} \leq ye^{m}$ for some
$x, y \in H$. Thus $e \leq e^{m} \leq xf^{n} =x (f^{n-1}f) $,
which implies that $e \leq s_{1}f$ for some $s_{1}\in S$.
Similarly we obtain that $ f \leq s_{2}e $ for some $s_{2} \in S
$. So $e \lc f$. Hence $e\lc^* f$.

$(2)\Rightarrow(3)$: Let $a, b \in S$. Since $S$ is $\pi$-regular,
let $m, n$ be the smallest positive integers such that $a^m,
b^n\in Reg_\leq(S)$. Then $a^m \leq a^m sa^m \;\textrm{and} \;
b^n\leq b^ntb^n$ for some $s,t \in S$. Since $a^ms,
b^nt,sa^m,tb^n\in E_\leq(S)$ and ordered idempotents of $S$ are
$\lc^*$-unique, there exists $x \in S$ such that $sa^m \leq xt
b^n$. Hence $a^m \leq a^msa^m \leq a^mxtb^n$ and thus $a^m \leq
s_1b^n$ where $s_1= a^mxt$. Similarly $b^n\leq s_2a^m$ for some
$s_2\in S$. Therefore $a^m\lc b^n$ and so $a\lc^* b$. Hence $S$ is
$\pi$-regular and $a\lc^* b$ for every $a, b\in S$.

$(3)\Rightarrow(4)$: This is obvious.

$(4)\Rightarrow(5)$: Let $a,b \in S$. Then by given condition
there exists $m \in \mathbb{N}$ and $x\in S$ such that $a^m \leq
a^mxb \leq a^m(xb)^2 \leq a^m(xb)^3  \leq \cdots$. Thus $a^m\leq
a^m(xb)^n$ for all $n \in \mathbb{N}$. Also for $xb, b^n\in S$,
the condition (4) yields that $(xb)^k \leq (xb)^kyb^n$, for some
$k \in \mathbb{N}$ and $y\in S$. Thus $a^m\leq a^m(xb)^k\leq
a^m(xb)^kyb^n$, that is, $a^m\in (a^mSb^n]$ for all $n \in
\mathbb{N}$.

$(5)\Rightarrow(6)$: This is obvious.

$(6)\Rightarrow(7)$ and $(7)\Rightarrow(8)$: These follow from
Corollary $\ref{1}$.

$(8)\Rightarrow(1)$: Suppose $S$ is a nil-extension of a left
simple $\pi$-regular ordered semigroup $K$. Clearly $K$ is a
subsemigroup of $S$ and for $a\in S$ there exists $m \in
\mathbb{N}$ such that $a^m\in K$. Therefore $S$ is a left
$\pi$-$t$-simple ordered semigroup.
\end{proof}

\begin{Theorem}
An ordered semigroup  $S$ is a left $\pi$-$t$-simple ordered
semigroup if  $S$ is right weakly commutative and $r$-Archimedean
with an $\lc^*$-unique ordered idempotent.
\end{Theorem}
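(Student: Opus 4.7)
The plan is to reduce the claim to condition (7) of Theorem~\ref{2}, which characterises the left $\pi$-$t$-simple ordered semigroups as precisely the $\pi$-regular, $l$-Archimedean ordered semigroups. Since the relation $\lc^*$ was introduced in the preliminaries only for $\pi$-regular ordered semigroups, the hypothesis that $S$ admits an $\lc^*$-unique ordered idempotent already carries $\pi$-regularity with it; the real task is therefore to upgrade the $r$-Archimedean hypothesis to the $l$-Archimedean property using right weak commutativity.

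To do this, I fix $a, b \in S$. First I apply $r$-Archimedeanness to produce $n \in \mathbb{N}$ and $s \in S$ with $a^n \leq bs$. Next I apply right weak commutativity to the pair $(b, s)$ to obtain $k \in \mathbb{N}$ and $v \in S$ with $(bs)^k \leq vb$. Because multiplication in $S$ is monotone on both sides, the inequality $a^n \leq bs$ iterates: $a^{2n} \leq bs \cdot a^n \leq (bs)^2$, and inductively $a^{nk} = (a^n)^k \leq (bs)^k \leq vb$. Hence $a^{nk} \in (Sb]$, which is exactly the $l$-Archimedean property for the pair $(a,b)$.

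Combining the $\pi$-regularity inherited from the $\lc^*$-uniqueness hypothesis with the $l$-Archimedean property just established gives condition (7) of Theorem~\ref{2}, and the equivalence $(7)\Leftrightarrow(1)$ there then delivers that $S$ is left $\pi$-$t$-simple. The only genuinely delicate ingredient is the short sandwich-and-iterate argument that converts right weak commutativity plus $r$-Archimedeanness into $l$-Archimedeanness; the rest is bookkeeping, and the $\lc^*$-uniqueness of the ordered idempotent contributes only implicitly, by guaranteeing the $\pi$-regularity needed to invoke Theorem~\ref{2}.
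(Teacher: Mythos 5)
Your reduction of the $r$-Archimedean property plus right weak commutativity to the $l$-Archimedean property is correct and clean: from $a^n\leq bs$ you correctly get $(a^n)^k\leq (bs)^k$ by compatibility of the order with multiplication, and $(bs)^k\in (Sb]$ by right weak commutativity applied to the pair $(b,s)$, so $a^{nk}\in (Sb]$. The problem is the other half of your argument: you dispose of $\pi$-regularity by declaring that the hypothesis ``$S$ has an $\lc^*$-unique ordered idempotent'' already presupposes it. That reading is not supported by the paper. An ordered idempotent $e$ satisfies $e\leq e^2\leq e^3=e\cdot e\cdot e$, so ordered idempotents are automatically regular and $\lc^*$ restricted to them is just $\lc$; the hypothesis is therefore meaningful without any global $\pi$-regularity assumption. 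Moreover, Theorem~\ref{2}(2) explicitly writes ``$\pi$-regular \emph{and} contains an $\lc^*$-unique ordered idempotent,'' whereas the present theorem deliberately omits the $\pi$-regularity clause --- and the paper's proof never assumes it. Since your appeal to condition (7) of Theorem~\ref{2} needs $\pi$-regularity, this is a genuine gap, and it is precisely where the bulk of the work lies.

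The paper fills that gap as follows: $r$-Archimedean implies Archimedean, the ordered idempotent $e$ satisfies $e\leq e(e^2)e$ so $Intra(S)\neq\phi$, and Theorem 3.5 of Cao--Xinzhai then exhibits $S$ as a nil-extension of a simple ordered semigroup $K$; regularity of $K$ is then extracted from $a\leq (xa)^r a(ay)^r$ together with right weak commutativity and $r$-Archimedeanness, and the $\lc^*$-uniqueness of the idempotent is used (via Theorem~\ref{7}) to show $K$ is left simple. Note also that in your version the idempotent hypothesis would do no work at all beyond allegedly supplying $\pi$-regularity --- if your reading were right, the theorem would hold for any $\pi$-regular, right weakly commutative, $r$-Archimedean ordered semigroup with no idempotent condition, which should itself have been a warning sign. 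To repair your proof you would need to actually derive $\pi$-regularity of $S$ from the stated hypotheses, which essentially forces you back onto the paper's nil-extension argument.
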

\begin{proof}
First suppose that $S$ is a right weakly commutative and
$r$-Archimedean ordered semigroup with an $\lc^*$-unique ordered
idempotent $e$. Then $S$ is an Archimedean ordered semigroup. Now
$e \leq e(e^{2})e$. Therefore $e \in Intra(S)$ so that $Intra(S)
\neq \phi $. Then $S$ is a nil-extension of a simple ordered
semigroup $K$ by Theorem 3.5 of $\cite{Cao 2000}$. Let $a \in K$.
Since $K$ is simple, $a \leq xa^{3}y$ for some $x, y \in K$, which
implies that $a \leq (xa)a(ay) \leq (xa)^{2}a(ay)^{2} \leq \ldots
\leq (xa)^{r}a(ay)^{r}$ for every $r \in \mathbb{N}$. Since $S$ is
right weakly commutative, there exist $n \in \mathbb{N}$ such that
$(ay)^{n} \in (Sa]$.  Also since $S$ is $r$-Archimedean, there
exists $m \in \mathbb{N}$ such that $(xa)^m\in (aS]$. Thus there
exist $z, w \in S$ such that $(xa)^{m} \leq az$ and $(ay)^{n} \leq
wa$. Hence $a \leq (xa)^{m}a(ay)^{n} \leq azawa$. Since $K$ is an
ideal of $S$, it follows that $zaw \in K$ and thus $a \in (aKa]$.
So $K$ is a regular ordered semigroup and hence $K$ is
$\pi$-regular ordered semigroup.

Let $e, f \in E_{\leq}(K)$. Then by given condition $e \lc^* f$ in
$S$. Since $K$ is an ideal of $S$, it is evident that $e \lc^* f$
in $K$ also. Therefore $K$ is a left simple and $\pi$-regular
ordered semigroup and so $S$ is a nil-extension of a left simple
$\pi$-regular ordered semigroup $K$. Hence $S$ is a left
$\pi$-$t$-simple ordered semigroup, by Theorem \ref{2}.

\end{proof}

\begin{Theorem}\label{4}
Let $S$ be an ordered semigroup. Then the following conditions are
equivalent:

\begin{itemize}
\item [(1)]

$S$ is a  semilattice of left $\pi$-$t$-simple ordered semigroups;

\item [(2)]

$S$ is  $\pi$-regular and for every $a, b \in S$, $ab\lc^* ba$;

\item [(3)]

$S$ is $\pi$-regular and right weakly commutative;

\item[(4)]

for all $a, b\in S$ there exists $m \in \mathbb{N}$ such that
$(ab)^m\in ((ab)^mS(ba)^{m+1}]$;

\item[(5)]

$S$ is a  semilattice of nil-extension of a left simple and
$\pi$-regular ordered semigroup.

\end{itemize}

\begin{proof}
$(1) \Rightarrow (2) $: Let $S$ be a  semilattice $Y$ of left
$\pi$-$t$-simple ordered semigroups $ \{S_{\alpha}\}_{\alpha \in
Y}$. By Theorem $\ref{2}$, for each  $\alpha \in Y$, $S_{\alpha}$
is $\pi$-regular and thus  so is $S$. Let $a, b \in S$. Then $a
\in S_{\alpha}$ and $b \in S_{\beta}$ for some $\alpha , \beta \in
Y$. Thus $ab \in S_{\alpha \beta}$ and $ba, b^ra^r \in S_{\beta
\alpha}$ for all $r \in \mathbb{N}$. Since $Y$ is a semilattice,
so $S_{\alpha \beta}= S_{\beta \alpha}$. Therefore $ba, b^ra^r \in
S_{\alpha \beta}$. Now $(ab)^n\leq (ab)^nz(ab)^n$ and $(ba)^m\leq
(ba)^mt(ba)^m$ for some $z, t \in S_{\alpha \beta}$, where $n, m$
are the smallest positive integers such that $(ab)^n, (ba)^m \in
Reg_\leq (S)$. Clearly $(ab)^n\lc z(ab)^n$ and $(ba)^m\lc t(ba)^m$
and $z(ab)^n, t(ba)^m\in E_\leq (S_{\alpha \beta})$. Also from
Theorem \ref{2}, $S_{\alpha \beta}$ contains an $\lc^*$-unique
ordered idempotent. That is any two ordered idempotents in
$S_{\alpha \beta}$ are $\lc^*$-related. This yields that
$(ab)^n\lc z(ab)^n\lc t(ba)^m\lc (ba)^m$. Hence $ab\lc^* ba$.

$(2) \Rightarrow (3) $:  Let $a, b\in S$. Then by condition (2) it
follows that $ab\lc^* ba$. Then $(ab)^n\lc (ba)^m$, where $n, m$
are the smallest positive integers such that $(ab)^n, (ba)^m \in
Reg_\leq (S)$. Then we have $(ab)^n\leq (ab)^ns(ab)^n$ for some
$s\in S$. Also $(ab)^n\lc (ba)^m$ gives that $(ab)^n\leq t(ba)^m$
for some $t\in S$. This yields that $(ab)^n \leq (ab)^nst(ba)^m$,
that is $(ab)^n\in (Sa]$. Hence $S$ is a $\pi$-regular  and right
weakly commutative ordered semigroup.

$(3) \Rightarrow (4) $: This is obvious.

$(4) \Rightarrow (5) $: By Corollary $4.5$  of $\cite{ke 2008}$ it
is evident that $S$ is a semilattice $Y$ of semigroups
$\{S_\alpha\}_{\alpha \in Y}$ and for each $\alpha \in Y$
$\{S_\alpha\}$  is a $l$-Archimedean ordered semigroup . Also by
condition (4) each $\{S_\alpha\}_{\alpha \in Y}$ is a
$\pi$-regular ordered semigroup. Hence each $\{S_\alpha\}_{\alpha
\in Y}$ is a nil-extension of a left simple and $\pi$-regular
ordered semigroup $\{K_\alpha\}_{\alpha \in Y}$ for every $\alpha
\in Y$, by Theorem $\ref{2}$.

$(5) \Rightarrow (1)$: This is obvious.
\end{proof}

\end{Theorem}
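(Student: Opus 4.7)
The plan is to prove the cycle $(1) \Rightarrow (2) \Rightarrow (3) \Rightarrow (4) \Rightarrow (5) \Rightarrow (1)$, using Theorem~\ref{2} at both ends to translate between the description of each semilattice component (left $\pi$-$t$-simple, equivalently $\lc^*$-uniqueness of ordered idempotents) and the structural conditions (2)--(4) on $S$ as a whole. The forward arc $(1) \Rightarrow (2) \Rightarrow (3) \Rightarrow (4)$ transports information out of each component up to $S$, while the returning arc $(4) \Rightarrow (5) \Rightarrow (1)$ rebuilds the semilattice decomposition from the weak-commutativity-type condition.

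For $(1) \Rightarrow (2)$, I would decompose $S = \bigcup_{\alpha \in Y} S_\alpha$ with each $S_\alpha$ left $\pi$-$t$-simple. For $a \in S_\alpha,\ b \in S_\beta$, commutativity in $Y$ places both $ab$ and $ba$ in $S_{\alpha\beta}$. Fix minimal $n, m$ with $(ab)^n, (ba)^m \in Reg_{\leq}(S)$; choose witnesses $z, t \in S_{\alpha\beta}$ with $(ab)^n \leq (ab)^n z (ab)^n$ and $(ba)^m \leq (ba)^m t (ba)^m$; observe $z(ab)^n,\ t(ba)^m \in E_{\leq}(S_{\alpha\beta})$. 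Theorem~\ref{2}(2) applied inside $S_{\alpha\beta}$ gives $z(ab)^n \lc t(ba)^m$, and chaining $(ab)^n \lc z(ab)^n \lc t(ba)^m \lc (ba)^m$ yields $ab \lc^* ba$. The subsequent steps $(2) \Rightarrow (3) \Rightarrow (4)$ are then routine: unfolding $\lc^*$ into the regular-power $\lc$-relation $(ab)^n \lc (ba)^m$ and combining with the approximating $\pi$-regular identity lands $(ab)^n$ in $(Sa]$, after which iterating $(ab)^m \leq (ab)^m s (ab)^m$ together with a right-weak-commutativity move reshapes the trailing factor as $(ba)^{m+1}$.

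For $(4) \Rightarrow (5)$, I would invoke Corollary~4.5 of \cite{ke 2008} to obtain a semilattice decomposition $S = \bigcup_\alpha S_\alpha$ with each $S_\alpha$ an $l$-Archimedean ordered semigroup; $\pi$-regularity of each $S_\alpha$ follows from (4), and then the equivalence $(7) \Leftrightarrow (8)$ in Theorem~\ref{2} identifies each $S_\alpha$ as a nil-extension of a left simple $\pi$-regular ordered semigroup. Finally $(5) \Rightarrow (1)$ is immediate, since nil-extensions of left simple $\pi$-regular ordered semigroups are by definition precisely the left $\pi$-$t$-simple ordered semigroups. The main obstacle I anticipate is $(1) \Rightarrow (2)$: one must keep all the idempotents produced inside the same component $S_{\alpha\beta}$ so Theorem~\ref{2}(2) applies \emph{locally}, and then lift the resulting $\lc$-relation from $S_{\alpha\beta}$ to $S$. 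The lift is legitimate because $\lc$ is defined through principal left ideals of $S$ and the witnesses used in the chain are themselves elements of $S$, but care is needed to reconcile the minimal regularity indices $n, m$ defining $\lc^*$ on $S$ with the corresponding indices used to certify $\lc^*$-uniqueness inside the component.
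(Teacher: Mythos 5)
Your proposal follows the paper's own argument essentially step for step: the same cycle $(1)\Rightarrow(2)\Rightarrow(3)\Rightarrow(4)\Rightarrow(5)\Rightarrow(1)$, the same construction of the idempotents $z(ab)^n$ and $t(ba)^m$ inside $S_{\alpha\beta}$ with Theorem~\ref{2} applied locally, and the same appeal to Corollary~4.5 of \cite{ke 2008} plus Theorem~\ref{2} for $(4)\Rightarrow(5)$. The subtlety you flag about reconciling regularity indices in $S$ versus in the component is real, but the paper glosses over it in exactly the same way, so your attempt matches the published proof.
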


Right $\pi$-$t$-simple ordered semigroup can be characterized
dually.

\section{Right $\pi$-inverse ordered semigroup}

This section is devoted to a detailed exposition of right
$\pi$-inverse ordered semigroups.

\begin{Definition}
A $\pi$-regular ordered semigroup $S$ is called right
$\pi$-inverse if every $a\in S$ there is $m \in \mathbb{N}$ such
that $(Sa^m]$ is generated by an $\rc$-unique ordered idempotent
of $S$.
\end{Definition}

\begin{Theorem}\label{5}
The following conditions are equivalent on a $\pi$-regular ordered
semigroup $S$.
\begin{enumerate}
\item\vspace{-.4cm}
$S$ is right $\pi$-inverse;
\item\vspace{-.4cm}
for $a\in S$ there is $m \in \mathbb{N}$ such
that $a', a'' \in V_{\leq}(a^m)$ \textrm{ implies}  $a'\rc a''$;
\item\vspace{-.4cm}
for $e,f\in E_{\leq}(S)$, there is $n \in
\mathbb{N}$ such that $(ef)^n \in (fSf]$;
\item\vspace{-.4cm}

for $e,f\in E_{\leq}(S)$, $((ef)^nS]\subseteq (eS] \cap (fS]$ for
some $n \in \mathbb{N}$;
\item\vspace{-.4cm}

for $e\in E_{\leq}(S)$ there is $m \in \mathbb{N}$ such that
$x^m\in (Se]$\textrm{ implies} $x'\in (eS]$, where $x\in S$ and
$x'\in V_{\leq}(x^m)$.
\end{enumerate}
\end{Theorem}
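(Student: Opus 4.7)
I would prove this via the cycle $(1)\Rightarrow(2)\Rightarrow(3)\Rightarrow(4)\Rightarrow(5)\Rightarrow(1)$, closely paralleling Theorem~\ref{51} for the regular case but systematically replacing each occurrence of $a$ by a suitable regular power $a^m$ afforded by the $\pi$-regularity of $S$ (and, when needed, by Lemma~\ref{3} which supplies an ordered idempotent generator of $(Sa^m]$). The recurring technical tool is the pair of inequalities $A\leq Af$ and $A\leq eA$ whenever $e,f\in E_\leq(S)$ and $A:=(ef)^m$; these follow at once from $f\leq f^2$ and $e\leq e^2$ and let us check that candidates such as $fa'$, $a'e$, and $fa'e$ remain in $V_\leq(A)$ whenever $a'$ does.

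For $(1)\Rightarrow(2)$, if $a',a''\in V_\leq(a^m)$ with $m$ chosen so that $(Sa^m]$ has an $\rc$-unique ordered idempotent generator, then $e:=a'a^m$ and $f:=a''a^m$ are themselves ordered idempotents generating $(Sa^m]$; the $\rc$-uniqueness forces $e\,\rc\,f$, and $a'\,\rc\,e$ (via $a'\leq a'a^ma'\in eS$ and $e\in a'S$) together with the analogous statement for $a''$ yields $a'\,\rc\,a''$. For $(2)\Rightarrow(3)$, take $a'\in V_\leq((ef)^m)$, use $A\leq Af$ and $A\leq eA$ to verify $fa'e\in V_\leq((ef)^m)$, then invoke (2) to obtain $a'\,\rc\,fa'e$, which gives $a'\in(fa'e\cup fa'eS]\subseteq(fS]$; plugging $a'\leq ft$ into $A\leq Aa'A$ and noting that $A\in Sf$ (since $A$ ends with $f$) produces $(ef)^n\in(fSf]$ after passing to a suitable power. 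The step $(3)\Rightarrow(4)$ is immediate: $(ef)^n\leq fsf$ places $(ef)^n$ in $(fS]$, while $(ef)^n=e(fe)^{n-1}f$ places it in $(eS]$, so $((ef)^nS]\subseteq(eS]\cap(fS]$. For $(4)\Rightarrow(5)$, set $f:=x'x^m$, an ordered idempotent, so $x'\leq fx'$ yields $x'\in(fS]$; combining with $x^m\leq ue$ and the containment $((ef)^nS]\subseteq(eS]$ from (4) forces $x'\in(eS]$. Finally, for $(5)\Rightarrow(1)$, any two ordered idempotent generators $g,h$ of $(Sa^m]$ (existing by Lemma~\ref{3}) satisfy $g\in(Sh]$ and $h\in(Sg]$; applying (5) with $e:=h$ and $x$ built from $g$ forces the inverses of a regular power of $g$ into $(hS]$, whence $g\,\rc\,h$.

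The main obstacle will be $(2)\Rightarrow(3)$. Condition (2) merely supplies a family of $\rc$-equivalences among the inverses $a',fa',a'e,fa'e$ of $A=(ef)^m$, i.e.\ memberships in $R(\cdot)=(\cdot\cup\cdot S]$-type sets, while $A$ itself always begins with $e$, not $f$. Producing $(ef)^n\leq fsf$ therefore requires combining several such memberships, substituting back into $A\leq Aa'A$, and very likely passing to a higher power of $ef$ (or squaring) in order to convert the leading $e$ of $A$ into an $f$-prefix; the $S^1$-cases, in which an inverse dominates $a'$ without a trailing factor, also require separate book-keeping. The remaining implications, though each needs some manipulation of the $(\cdot]$-closure, are essentially routine adaptations of the arguments behind Theorem~\ref{51}.
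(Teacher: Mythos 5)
Your overall architecture is the paper's: the same cycle $(1)\Rightarrow(2)\Rightarrow(3)\Rightarrow(4)\Rightarrow(5)\Rightarrow(1)$, obtained from Theorem~\ref{51} by replacing elements with regular powers and invoking Lemma~\ref{3} for $(5)\Rightarrow(1)$. The steps $(1)\Rightarrow(2)$, $(3)\Rightarrow(4)$, $(4)\Rightarrow(5)$ and $(5)\Rightarrow(1)$ match the paper's proofs in substance (the paper's idempotent in $(4)\Rightarrow(5)$ is $x'se$ with $x^m\leq se$ rather than your $x'x^m$, but the mechanism --- iterating $x'\leq x'seex'$ to land $x'$ in $((x'see)^nS]\subseteq(eS]$ --- is the one you describe).

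The genuine gap is exactly where you flag it, in $(2)\Rightarrow(3)$, and your proposed repair cannot work. Condition (2) only ever yields relations of the form ``$u\leq vs$'' when $u$ and $v$ are both ordered inverses of a common power of some base element. In your sketch the base element is always $(ef)^m$ and the objects being $\rc$-related are its inverses $a',fa'e,\dots$; this places $a'$ in $(fS]$, but substituting $a'\leq ft$ into $A\leq Aa'A$ returns $(ef)^m\leq (ef)^mft(ef)^m\in(eSf]$, and no amount of squaring or passing to higher powers removes the leading $e$, since every power of $ef$ and every iterate of $A\leq Aa'A$ begins with the factor $A$. To put $(ef)^n$ itself inside an $(fS]$-type set you must make $(ef)^n$ one of the \emph{two inverses} being compared, not the base element. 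The paper does this by taking $x\in V_\leq((ef)^n)$ and observing that $fxe$ is regular with \emph{two} ordered inverses, namely $(ef)^n$ and $(fe)^{n-1}fxe(fe)^{n-1}$ (both memberships verified directly from $e\leq e^2$, $f\leq f^2$). Condition (2) applied to the base element $fxe$ then gives $(ef)^n\,\rc\,(fe)^{n-1}fxe(fe)^{n-1}$, whence $(ef)^n\leq (fe)^{n-1}fxe(fe)^{n-1}ux(ef)^n$ for some $u\in S$; this expression begins with $f$ and ends with $f$, so $(ef)^n\in(fSf]$ (the case $n=1$ being Theorem~\ref{51}). Without this change of base element your argument does not close.
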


\begin{proof}
$(1)\Rightarrow(2)$: Let $S$ be a right $\pi$-inverse semigroup
and $a\in S$. Clearly $S$ is $\pi$-regular. Consider  $a',a''\in
V_{\leq}(a^m)$ for some $m \in \mathbb{N}$. Then $a'a^m,a''a^m\in
E_{\leq}(S)$. Say $L=(Sa^m]$ and let $x\in (Sa^m]$. Then there is
$s\in S$ such that $x\leq sa^m$. Thus $x\leq sa^ma'a^m$, which
implies that $x\in (Sa'a^m]$ and so $(Sa^m]\subseteq (Sa'a^m]$.
Also $(Sa'a^m]\subseteq (Sa^m]$. Hence $(Sa^m]=(Sa'a^m]$.
Similarly $(Sa^m]=(Sa''a^m]$. Hence $(Sa^m]=(Sa'a^m]=(Sa''a^m]$.
Since $S$ is right $\pi$-inverse, we have $a'a^m\rc a''a^m$. Now
$a''\leq a''a^ma'' \leq a'a^mz_1a''$ for some $z_1\in S$. Hence
$a''\leq a't_1$, where $t_1=a^mz_1a''$. Also $a'\leq a'a^ma'\leq
a''a^mz_2a'$ for some $z_2\in S$. Hence $a' \leq a''t_2$, where
$t_2=a^mz_2a'$. Hence $a'\rc a''$.

$(2)\Rightarrow (3)$: Let $e,f\in E_{\leq}(S)$. Since $S$ is
$\pi$-regular, $V_{\leq}(ef)^n \neq\phi$ for some $n \in
\mathbb{N}$. Let $x\in V_{\leq}(ef)^n$.
We consider the following cases:\\
Case 1: If $n=1$ then $ef \in (fSf]$ holds, by Theorem \ref{51}.\\
Case 2: If $n>1$ then $x\leq x(ef)^nx$ implies that $fxe\leq
fxe(ef)^nfxe$. Also $(ef)^n\leq (ef)^nx(ef)^n$ implies that
$(ef)^n\leq (ef)^n(fxe)(ef)^n$. Thus $(ef)^n\in V_\leq(fxe)$. Now
$x\leq x(ef)^nx= xe(fe)^{n-1}fx$ so that $fxe\leq
fxe(fe)^{n-1}fxe\leq fxe(fe)^{n-1}fxe(fe)^{n-1}fxe$ and
$(fe)^{n-1}fxe(fe)^{n-1}\leq
(fe)^{n-1}fxe(fe)^{n-1}fxe(fe)^{n-1}\\\leq
(fe)^{n-1}fxe(fe)^{n-1}fxe(fe)^{n-1}fxe(fe)^{n-1}$. This gives
$(fe)^{n-1}fxe(fe)^{n-1}\in V_\leq(fxe)$.\\ Thus $(ef)^n,
(fe)^{n-1}fxe(fe)^{n-1}\in V_\leq(fxe)$. Then by condition (2), we
have $(ef)^n\rc (fe)^{n-1}fxe(fe)^{n-1}$. So $(ef)^n\leq
(ef)^nx(ef)^n \leq (fe)^{n-1}fxe(fe)^{n-1}ux(ef)^n$ for some $u\in
S$.  Hence $(ef)^n\in (fSf]$.

$(3)\Rightarrow (4)$: Let $e,f \in E_\leq(S)$. Then by condition
(3), there is $n \in \mathbb{N}$ such that $(ef)^n\in (fSf]$. Let
$y\in ((ef)^nS]$. Then $y\in (eS]$ and $y\leq (ef)^nq$ for some
$q\in S$.  Then there is $s_6\in S$ such that $y\leq fs_6fq$. Thus
$y\in (fS]$ and so $y\in (eS]\cap (fS]$. Therefore $
((ef)^nS]\subseteq (eS]\cap (fS]$.

$(4)\Rightarrow (5)$: Let $x\in S$ and $e\in E_\leq(S)$. Let
$x'\in V_{\leq}(x^m)$ for some $m \in \mathbb{N}$. Then $x^m\leq
x^mx'x^m$ and $x'\leq x'x^mx'$. Let $x^m\in (Se]$ . Then there is
$s\in S$ such that $x^m\leq se$. Now $x'se\leq x'x^mx'se\leq
x'sex'se$ so that $x'se\in E_\leq(S)$. Then for $ x'se,e\in
E_{\leq}(S) $ we have that $ ((x'see)^nS]\subseteq (eS]\cap
(x'seS]$, for some $n \in \mathbb{N}$ by condition (4). Also from
$x'\leq x'x^mx'$ we have $x'\leq x'sex'\leq x'seex'\leq
x'seex'seex'\leq \cdots$. Therefore $x'\in ((x'See)^rS]$ for all
$r \in \mathbb{N}$. So $x'\in ((x'See)^nS]$ and hence $x'\in
(eS]$.

$(5)\Rightarrow(1)$:  Let $a\in S$. Since $S$ is $\pi$-regular,
there is $m \in \mathbb{N}$ such that $I=(Sa^m]=(Se]$ for some
$e\in E_\leq(S)$, by Lemma $\ref{3}$. If possible let $I= (Sf]$
for some $ f \in E_\leq(S)$ other than $e$. Now $e^m\leq e^me$, so
$e^m\in (Se]=(Sf]$. Now $e^m\in V_\leq(e)$. So from the give
condition $e^m\in (fS]$. Similarly $f^m\in (eS]$ and thus $e^m\rc
f^m$ implies $e\rc f$. Hence $S$ is a right $\pi$-inverse ordered
semigroup.

\end{proof}
\begin{Corollary}
An ordered semigroup $S$ is  $\pi$-inverse  if and only if $S$ is
both a left and a right $\pi$-inverse ordered semigroup.
\end{Corollary}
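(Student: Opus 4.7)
The plan is to first unpack the terminology: the natural reading of \emph{$\pi$-inverse} in this framework is the symmetric strengthening of condition (2) in Theorem \ref{5}, namely that for every $a \in S$ there exists $m \in \mathbb{N}$ such that $a',a'' \in V_\leq(a^m)$ forces $a' \hc a''$. Dually, \emph{left $\pi$-inverse} is the analogue of Theorem \ref{5}(2) with $\lc$ in place of $\rc$. Once the three notions are written in this parallel form, the corollary reduces to the pointwise identity $\hc = \lc \cap \rc$ applied inside $V_\leq(a^m)$.

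The forward direction is essentially immediate: if $S$ is $\pi$-inverse and $a \in S$, an exponent $m$ witnessing $\pi$-inverseness also witnesses the right $\pi$-inverse property, since $\hc \subseteq \rc$, and likewise witnesses the left $\pi$-inverse property via $\hc \subseteq \lc$. No combination of exponents is needed.

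For the converse, I would take an arbitrary $a \in S$ and let $m_r$ and $m_\ell$ be exponents supplied by the right and left $\pi$-inverse hypotheses respectively. The natural candidate for a single witnessing exponent is $m = m_r \, m_\ell$ (or any common multiple). Given $a',a'' \in V_\leq(a^m)$, the strategy is to associate to them companion inverses of the smaller powers by forming products such as $a^{m-m_r}a'$ and $a^{m-m_r}a''$, verify these lie in $V_\leq(a^{m_r})$, invoke the right $\pi$-inverse hypothesis to obtain an $\rc$-relation between the companions, and then transport this $\rc$-relation back to $a'$ and $a''$ using the defining inequalities $a' \leq a'a^ma'$ and $a^m \leq a^m a' a^m$. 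A dual construction with $a'a^{m-m_\ell}$ and the left $\pi$-inverse hypothesis yields $a' \lc a''$. Combining gives $a' \hc a''$, as required.

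The main obstacle, and the only part requiring genuine computation, is the transfer step: one must confirm that the manufactured elements really are ordered inverses of the appropriate smaller powers and, more importantly, that an $\rc$-relation (respectively $\lc$-relation) between these auxiliary elements genuinely pulls back to an $\rc$-relation (respectively $\lc$-relation) between the original $a'$ and $a''$. This is reminiscent of standard ``exponent-raising'' lemmas in $\pi$-regular ordered semigroups, and I expect it to go through cleanly once the inequalities from $V_\leq(a^m)$ are applied on both sides; but care with the order $\leq$ and with the distinction between equality and the principal right (or left) ideals $(xS]$ and $(Sx]$ is essential.
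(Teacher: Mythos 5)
The paper offers no proof of this corollary at all; it is evidently meant to be read off from Theorem \ref{5}(2) and its left--right dual, with the exponent $m$ understood to be the canonical one (the least $m$ with $a^m\in Reg_\leq(S)$), so that a single $m$ witnesses both the $\rc$- and the $\lc$-condition and $\hc=\lc\cap\rc$ finishes the argument. Your forward direction is correct and is exactly this observation. The problem is your converse. You correctly identify the exponent-reconciliation issue as the crux, but the device you propose to resolve it does not work, and you never actually carry out the ``transfer step'' you flag as the main obstacle. Concretely: to show $a^{m-m_r}a'\in V_\leq(a^{m_r})$ you would need $a^{m_r}\leq a^{m_r}(a^{m-m_r}a')a^{m_r}=a^{m}a'a^{m_r}$, and the only available inequality is $a^{m}\leq a^{m}a'a^{m}$; there is no cancellation of the factor $a^{m-m_r}$ in an ordered semigroup, so this inequality simply cannot be derived. (Regularity of $a^m$ does not pass to smaller powers $a^{m_r}$, nor even to larger ones, in general.) The pull-back is equally problematic: from $a^{m-m_r}a'\leq a^{m-m_r}a''t$ the best you can extract, using $a'\leq a'a^{m_r}\cdot a^{m-m_r}a'$, is $a'\leq a'a^{m}a''t$, which places $a'$ in $(Sa''S]$ rather than in $(a''S]$, so it does not yield $a'\,\rc\,a''$.

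The repair is not more computation but a different observation: the proof of $(1)\Rightarrow(2)$ in Theorem \ref{5} never uses the particular $m$ from the definition of right $\pi$-inverse --- it runs for \emph{every} $m$ with $V_\leq(a^m)\neq\emptyset$. Hence a right $\pi$-inverse semigroup satisfies ``$a',a''\in V_\leq(a^m)$ implies $a'\,\rc\,a''$'' for every such $m$, in particular for the least $m$ with $a^m\in Reg_\leq(S)$, and dually on the left. Taking that common $m$, any two elements of $V_\leq(a^m)$ are both $\rc$- and $\lc$-related, hence $\hc$-related, which is the $\pi$-inverse condition. With this in place no companion inverses and no exponent-raising are needed.
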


\begin{Theorem}\label{6}
A $\pi$-regular ordered semigroup $S$ is  right $\pi$-inverse if
and only if for any two idempotents $e,f\in E_\leq(S)$, $e\lc^* f$
implies $e\rc^* f$.
\begin{proof}
First suppose that  $S$ is a right $\pi$-inverse ordered
semigroup. Let $e,f\in E_\leq (S)$ be such that $ e\lc^* f$.
Clearly $e\lc f$. Then $e\leq xf$ and $f\leq ye$ for some $x,y\in
S$. Now $e\leq xf $ implies $e\leq exf$, and so $e\leq ee\leq
exfe$, which implies that $exf\leq exfexf$. So $exf\in
E_{\leq}(S)$. Similarly $f\leq fye$ and $fye\in E_{\leq}(S)$. Now
$e\leq exf\leq exff \leq (exf)(fye)$. Since $exf, fye\in
E_\leq(S)$ and $S$ is right $\pi$-inverse $(exffye)^m \leq
fyesexf$, for some  $m \in \mathbb{N}$ and $s\in S$. Thus $e\leq
e^m$ implies that $e \leq(exffye)^m$ and therefore $e\leq
fyesexf$. Hence $e\leq ft_1$, where $t_1= yesexf$. Similarly
$f\leq et_2$ for some $t_2\in S$. Therefore $e\rc f$. Hence
$e\rc^* f$.

Conversely assume that the given condition holds in $S$. Let $a\in
S$ and consider the left ideal  $L=(Sa^m]$ for some $m \in
\mathbb{N}$. Then it is evident that there is  $e\in E_\leq(S)$
such that $L =(Se]$. If possible let $L= (Sf]$ for some $f\in
E_\leq(S)$. Then $e\lc f$ which implies that $e\lc^* f$. So by the
given condition $e\rc^*f$ and hence $e\rc f$. Hence $S$ is a right
$\pi$-inverse ordered semigroup.
\end{proof}
\end{Theorem}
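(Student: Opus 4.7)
The plan is to reduce both directions to condition (3) of Theorem \ref{5}, which says $S$ is right $\pi$-inverse if and only if for every $e,f \in E_\leq(S)$ there exists $n \in \mathbb{N}$ with $(ef)^n \in (fSf]$. A useful preliminary remark is that every ordered idempotent $g$ is regular, since $g \leq g^2$ iterates to $g \leq g \cdot g \cdot g$; so the smallest power landing in $Reg_\leq(S)$ is $1$, and on $E_\leq(S)$ the starred relations $\lc^*$ and $\rc^*$ collapse to $\lc$ and $\rc$.

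For the necessity, I would start with $e,f \in E_\leq(S)$ satisfying $e \lc^* f$, which by the remark is $e \lc f$, and extract $x,y \in S$ with $e \leq xf$ and $f \leq ye$. The key construction is to pass to the auxiliary elements $exf$ and $fye$: multiplying $e \leq xf$ on the left by $e$ and using $e \leq e^2$ yields $e \leq exf$, and then squaring gives $exf \leq (exf)(exf)$, so $exf \in E_\leq(S)$; the symmetric computation places $fye \in E_\leq(S)$ with $f \leq fye$. One then checks that $e \leq exf \cdot fye$. Applying Theorem \ref{5}(3) to the pair $(exf, fye)$ supplies an exponent $m$ and an element $s \in S$ with $((exf)(fye))^m \leq (fye)\,s\,(exf)$, and since $e \leq e^m \leq ((exf)(fye))^m$, this delivers $e \leq f \cdot (yesexf)$, i.e., $e \in (fS]$. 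A symmetric argument (swapping the roles of $e$ and $f$) produces $f \in (eS]$, whence $e \rc f$ and therefore $e \rc^* f$.

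For the sufficiency, I would appeal directly to the definition together with Lemma \ref{3}. Given $a \in S$, pick $m \in \mathbb{N}$ and $e \in E_\leq(S)$ with $(Sa^m] = (Se]$. To show that the generating ordered idempotent is $\rc$-unique, suppose $(Sa^m] = (Sf]$ also holds for some $f \in E_\leq(S)$. Then $L(e) = L(f)$, so $e \lc f$, hence $e \lc^* f$; the assumed implication then yields $e \rc^* f$, which (by the preliminary remark) is $e \rc f$. This is precisely the defining property of a right $\pi$-inverse ordered semigroup.

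The principal technical step is the forward direction, specifically verifying that $exf$ and $fye$ belong to $E_\leq(S)$ and that $e$ sits below their product, so that condition (3) of Theorem \ref{5} can be imported. Once these two order inequalities are pinned down, the hypothesis transports the $\lc$-comparison of $e$ and $f$ into an $\rc$-comparison, and the converse reduces to a bookkeeping argument with Lemma \ref{3}.
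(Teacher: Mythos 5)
Your proposal is correct and follows essentially the same route as the paper: the forward direction uses the same auxiliary idempotents $exf$ and $fye$ together with condition (3) of Theorem \ref{5} to land $e$ in $(fS]$, and the converse is the same bookkeeping with Lemma \ref{3} and the collapse of $\lc^*,\rc^*$ to $\lc,\rc$ on ordered idempotents. No substantive differences to report.
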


\begin{Corollary}
An ordered semigroup which is both right $\pi$-inverse and left
$\pi$-$t$-simple is a $\pi$-$t$-simple ordered semigroup.
\end{Corollary}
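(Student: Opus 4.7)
The plan is to combine the internal characterizations of the two hypotheses to force every pair of ordered idempotents to be $\hc^*$-related, and then invoke the dual form of Theorem \ref{2}. First, since $S$ is left $\pi$-$t$-simple, Theorem \ref{2} (in its $(1)\Leftrightarrow(2)$ form) tells us that $S$ is $\pi$-regular and contains an $\lc^*$-unique ordered idempotent, so that $e\lc^* f$ for all $e,f\in E_\leq(S)$.

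Next I would feed this into Theorem \ref{6}. Because $S$ is right $\pi$-inverse and $\pi$-regular, Theorem \ref{6} says that $e\lc^* f$ forces $e\rc^* f$ whenever $e,f\in E_\leq(S)$. Consequently every pair of ordered idempotents in $S$ is simultaneously $\lc^*$-related and $\rc^*$-related, i.e. any two ordered idempotents are $\hc^*$-related, and in particular $S$ contains an $\rc^*$-unique ordered idempotent.

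Finally, by the dual of Theorem \ref{2} (the author explicitly notes that right $\pi$-$t$-simple ordered semigroups admit the dual characterization), the conjunction of $\pi$-regularity and the existence of an $\rc^*$-unique ordered idempotent is equivalent to $S$ being right $\pi$-$t$-simple. Thus $S$ is both left and right $\pi$-$t$-simple, which is exactly the definition of $\pi$-$t$-simple.

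There is essentially no obstacle here; the corollary is a clean synthesis of the characterization theorems already proved. The only point that needs a tiny bit of care is making sure we cite the correct clauses of Theorem \ref{2} (condition (2) for the forward extraction of $\lc^*$-uniqueness of idempotents, and the dual of (2) to reassemble right $\pi$-$t$-simplicity from $\rc^*$-uniqueness), but no new computation is required.
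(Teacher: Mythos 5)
Your proposal is correct and follows exactly the route the paper intends: its entire proof is the sentence ``This follows from Theorem \ref{2} and Theorem \ref{6},'' and your argument simply fills in the details of that synthesis (extract $\lc^*$-uniqueness of idempotents from Theorem \ref{2}(2), upgrade to $\rc^*$-uniqueness via Theorem \ref{6}, and reassemble right $\pi$-$t$-simplicity from the dual of Theorem \ref{2}).
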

\begin{proof}
This follows from Theorem $\ref{2}$ and Theorem $\ref{6}$.
\end{proof}

\begin{Lemma}\label{7}
Let $S$ be a right $\pi$-inverse ordered semigroup. Then $a\lc^*
b$ implies $a'a^m\rc^* b'b^n$, for some $a,b\in S$ and $m, n \in
\mathbb{N}$, where $a'\in V_\leq(a^m)$, $b'\in V_\leq(b^n)$.
\end{Lemma}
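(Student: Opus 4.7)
My plan is to move through the chain $a\lc^* b \;\Rightarrow\; a^m\lc b^n \;\Rightarrow\; a'a^m\lc b'b^n \;\Rightarrow\; a'a^m\lc^* b'b^n \;\Rightarrow\; a'a^m\rc^* b'b^n$, using Theorem \ref{6} at the last step.

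First I would fix $a,b\in S$ with $a\lc^* b$, and take the smallest positive integers $m,n$ such that $a^m,b^n\in Reg_\leq(S)$, so that $a^m\lc b^n$ by definition of $\lc^*$. I would then pick $a'\in V_\leq(a^m)$ and $b'\in V_\leq(b^n)$ and check that $a'a^m$ and $b'b^n$ lie in $E_\leq(S)$: indeed from $a^m\leq a^ma'a^m$ we get $a'a^m\leq a'a^ma'a^m=(a'a^m)^2$, and symmetrically for $b'b^n$.

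Next I would establish $a^m\lc a'a^m$. The inclusion $a'a^m\in (Sa^m]\subseteq L(a^m)$ is immediate, and the reverse follows from $a^m\leq a^ma'a^m=a^m(a'a^m)\in (S(a'a^m)]$, so $L(a^m)=L(a'a^m)$. The same argument gives $b^n\lc b'b^n$. Combining with $a^m\lc b^n$ and transitivity yields $a'a^m\lc b'b^n$.

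Since $a'a^m$ and $b'b^n$ are already ordered idempotents, each is regular (taking $e\cdot e\cdot e\geq e$), so the smallest integer needed in the definition of $\lc^*$ is $1$, and $a'a^m\lc b'b^n$ upgrades at once to $a'a^m\lc^* b'b^n$. Applying Theorem \ref{6} (right $\pi$-inverseness forces $\lc^*$-related idempotents to be $\rc^*$-related) now gives $a'a^m\rc^* b'b^n$, which is the desired conclusion. I do not anticipate any real obstacle here; the only thing to be careful about is verifying the idempotency of $a'a^m$ and $b'b^n$ and tracing the principal left ideals cleanly so that the upgrade from $\lc$ to $\lc^*$ is justified before invoking Theorem \ref{6}.
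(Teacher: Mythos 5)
Your proof is correct and follows essentially the same route as the paper's: establish $a^m\lc a'a^m$ and $b^n\lc b'b^n$ from the inverse inequalities, chain these with $a^m\lc b^n$ to get $a'a^m\lc b'b^n$, upgrade to $\lc^*$ using idempotency, and invoke Theorem \ref{6}. Your version merely spells out the verification of idempotency and the principal-ideal bookkeeping more explicitly than the paper does.
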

\begin{proof}
Let $a,b\in S$ such that $a\lc^* b$. Then $a^m\lc b^n$, where $m,
n$ are the smallest positive integers such that $a^m, b^n\in
Reg_\leq(S)$. Let $a'\in V_{\leq}(a^m)$, $b'\in V_{\leq}(b^n)$.
Since $a^m\leq a^ma'a^m$ and $a'a^m\leq a'a^ma'a^m$, we have
$a^m\lc a'a^m$, which implies that $b^n\lc a'a^m$. Also $b^n\lc
b'b^n$. Hence $a'a^m\lc b'b^n$. Since $a'a^m, b'b^n\in
E_{\leq}(S)$ therefore $a'a^m\lc^* b'b^n$. Since $S$ is right
$\pi$-inverse we have $a'a^m\rc^* b'b^n$, by Theorem $\ref{6}$.

\end{proof}

Left $\pi$-inverse ordered semigroup can be characterized dually.

In the following theorem we show that $S$ is a  semilattice of
right $\pi$-$t$-simple ordered semigroups if and only if $S$ is
right $\pi$-inverse and $\rc^*$ is a congruence on $S$.
\begin{Theorem}\label{8}
Let $S$ be a right $\pi$-inverse ordered semigroup. Then following
conditions are equivalent:
\begin{enumerate}
\item\vspace{-.4cm}
$\rc^*$ is a congruence on $S$;
\item\vspace{-.4cm}
$\lc^* \subseteq\rc^*$;
\item\vspace{-.4cm}

$S$ is a semilattice of right $\pi$-$t$-simple ordered semigroups;
\item\vspace{-.4cm}
$\rc^*$ is a semilattice congruence on $S$.
\end{enumerate}
\end{Theorem}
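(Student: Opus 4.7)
The plan is to prove the four equivalences through the cyclic chain $(1)\Rightarrow (2)\Rightarrow (3)\Rightarrow (4)\Rightarrow (1)$, with $(4)\Rightarrow (1)$ immediate from the definition of a semilattice congruence.

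For $(1)\Rightarrow (2)$, I would begin with $a,b\in S$ satisfying $a\lc^* b$, so that $a^m\lc b^n$ for the smallest powers $m,n$ making $a^m,b^n$ regular. Picking $a'\in V_\leq(a^m)$ and $b'\in V_\leq(b^n)$, Lemma \ref{7} delivers $a'a^m\rc^* b'b^n$. Since $\rc^*$ is assumed a left congruence, left-multiplying by $a^m$ yields $a^m a'a^m\rc^* a^m b'b^n$. Noting that $a^m\rc a^m a'a^m$ (both elements generate the same principal right ideal, using $a^m\leq a^m a'a^m\in (a^m S]$), I obtain $a^m\rc^* a^m b'b^n$. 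A symmetric computation on the $b^n$-side, combined with the $\lc$-connection $a^m\lc b^n$ to relate the intermediate element $a^m b'b^n$ back to $b^n$, should close the loop to give $a^m\rc^* b^n$, i.e.\ $a\rc^* b$.

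For $(2)\Rightarrow (3)$, I would invoke the dual of Theorem \ref{4}, which reduces the question to verifying $ab\rc^* ba$ for all $a,b\in S$. Using the identity $(ab)^k a=a(ba)^k$ together with the $\pi$-regularity of suitable powers, one can connect $ab$ and $ba$ by an $\rc$-step followed by an $\lc$-step through the common element $(ab)^k a$; the hypothesis $\lc^*\subseteq \rc^*$ then collapses this chain to a single $\rc^*$-relation, as required.

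For $(3)\Rightarrow (4)$, write $S=\bigcup_{\alpha\in Y}S_\alpha$ for the semilattice decomposition into right $\pi$-$t$-simple components. By the dual of Theorem \ref{2}(3), each $S_\alpha$ lies in a single $\rc^*$-class; condition (4) of the complete semilattice decomposition prevents $\rc^*$-relations between distinct components, so the $\rc^*$-classes coincide with the $S_\alpha$'s. The semilattice congruence identities $a\rc^* a^2$ and $ab\rc^* ba$ then follow from $a,a^2\in S_\alpha$ and $S_{\alpha\beta}=S_{\beta\alpha}$, while compatibility with multiplication is ensured by the inclusion $S_\alpha S_\beta\subseteq S_{\alpha\beta}$.

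The main technical obstacle will be $(1)\Rightarrow (2)$: the step of linking the intermediate element $a^m b'b^n$ back to $b^n$ via an $\rc^*$-relation requires combining the congruence information with the $\lc$-link $a^m\lc b^n$ in a way that is not an immediate consequence of the hypotheses, and care is needed to track how the auxiliary idempotents supplied by Lemma \ref{7} interact with the congruence operations so that the closing equality $a^m\rc b^n$ is actually produced.
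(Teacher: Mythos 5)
Your overall architecture (the cycle $(1)\Rightarrow(2)\Rightarrow(3)\Rightarrow(4)\Rightarrow(1)$, with Lemma \ref{7} feeding $(1)\Rightarrow(2)$ and the dual of Theorem \ref{4} handling $(2)\Rightarrow(3)$) matches the paper, but two of the steps have genuine gaps. The first is $(1)\Rightarrow(2)$, and you have correctly identified it yourself: after obtaining $a^m\rc^* a^mb'b^n$ you have no mechanism for returning from $a^mb'b^n$ to $b^n$, and the $\lc$-link $a^m\lc b^n$ alone will not produce an $\rc^*$-relation. The paper closes this loop differently: it sets $e=a^ma'$ and $f=a'a^m$, uses the congruence hypothesis to get $e\rc^* ef$ and $f\rc^* fe$, and then — crucially — invokes the right $\pi$-inverse property again, in the form of Theorem \ref{5}(3) ($(fe)^n\in(eSe]$ for idempotents $e,f$), to deduce $e\rc f$ and hence $a\rc^* e\rc^* f\rc^* a'a^m$. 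Your sketch uses the right $\pi$-inverse hypothesis only through Lemma \ref{7}; without a second application of it (or something equivalent) the implication does not close, so this is a missing idea, not just a missing computation.

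The second gap is in $(3)\Rightarrow(4)$. You claim the $\rc^*$-classes coincide with the components $S_\alpha$, citing condition (4) of the complete semilattice decomposition to rule out $\rc^*$-relations across distinct components. But hypothesis (3) only gives a semilattice decomposition, not a complete one, so that condition is not available; and even the inclusion of each $S_\alpha$ in a single $\rc^*$-class of $S$ is delicate, because $\rc^*$ is defined via the \emph{smallest} exponents making powers regular, and regularity in $S_\alpha$ need not occur at the same exponent as regularity in $S$. The paper avoids identifying the classes with the components altogether: it derives the auxiliary relations $a\rc^* a^2$, $ab\rc^* ba$, $ab\rc^* b^ra^r$ from Theorem \ref{4} and left $\pi$-regularity, proves $\rc\subseteq\rc^*$ by a case analysis on exponents, and then gets $ac\rc^* bc$ from $c^{2mn}a^{2mn}\rc c^{2mn}b^{2mn}$ using that $\rc$ is a left congruence. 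Your $(2)\Rightarrow(3)$ is essentially the paper's idea (connect $ab$ and $ba$ through a common element and convert $\lc^*$ to $\rc^*$), though the paper routes it through left weak commutativity ($(ab)^n\in(bS]$) plus right $\pi$-regularity rather than verifying $ab\rc^* ba$ directly; that part of your plan is workable.
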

\begin{proof}
$(1)\Rightarrow (2)$. Let $a, b\in S$ such that $a\lc^* b$. Since
$S$ is right $\pi$-inverse, so $S$ is $\pi$-regular and
$a'a^m\rc^* b'b^n$, by Lemma $\ref{7}$. For $a\in S$ there is
$a'\in V_\leq (a^m)$, where $m$ is the smallest positive integer
such that $a^m\in Reg_\leq(S)$. Now $a^m\leq a^ma'a^m$ and $a'\leq
a'a^ma'$. Denote $a^ma'= e$ and $a'a^m= f$. Therefore $a^m\rc e$
and $a'\rc f$. So $a\rc^*e$ and $a'\rc^*f$. Also $a\rc^*e$,
$a\rc^*e$, $\cdots$, $m$-times yields that $a^m\rc^*e^m\rc^* e$.
Then congruenceness of $\rc^*$ on $S$ gives that $ a^ma'\rc^* ef$,
that is $e\rc^* ef$. Similarly $f\rc^* fe$. So there is $z\in S$
such that $f\leq (fe)^nz$, where $n$ is the least positive integer
such that $(fe)^n\in Reg_\leq(S)$. Since $S$ is right
$\pi$-inverse, there is $x\in S$ such that $(fe)^n\leq exe$, by
Theorem \ref{5}. Hence $f\leq (fe)^nz\leq exez= et_1$, where $t_1=
xez$. Similarly $e\leq ft_2$, for some $t_2\in S$. Therefore $f\rc
e$. So $f\rc^*e$. Hence $a\rc^* e\rc^* ef\rc^* fe\rc^* f\rc^* a'$.
Therefore $a\rc^* a'a^m\rc^* b'b^n\rc^* b$. Hence $a\rc^* b$. So
$\lc^*\subseteq \rc^*$.

$(2)\Rightarrow (3)$: Let $a\in S$. Since $S$ is $\pi$-regular,
let $a'\in V_\leq(a^m)$, where $m$ is the smallest positive
integer such that $a^m\in Reg_\leq(S)$. Clearly $a\lc^* a'a^m$ and
so $a\rc^* a'a^m$. Thus $a^m\rc a'a^m$. Therefore $a^m\leq
a^ma'a^m\leq a^{2m}t$ for some $t\in S$. Hence $S$ is a right
$\pi$-regular ordered semigroup.

Let $n$ be the smallest positive integer such that $(ab)^n\in
Reg_\leq(S)$. Then there is $s\in S$ such that $(ab)^n\leq
(ab)^ns(ab)^n\leq (ab)^ns(ab)^ns(ab)^n\leq
(ab)^nsa(ba)^{n-1}bs(ab)^n$. Also $(ba)^{n-1}bs(ab)^n\leq
(ba)^{n-1}bs(ab)^ns(ab)^n\leq (ba)^{n-1}bs(ab)^ns(ab)^ns(ab)^n\leq
(ba)^{n-1}bs(ab)^nsa(ba)^{n-1}bs(ab)^n$ which implies that
$(ba)^{n-1}bs(ab)^n\in Reg_\leq(S)$ and hence $ab\lc^*
(ba)^{n-1}bs(ab)^n$. Therefore $ab\rc^* (ba)^{n-1}bs(ab)^n$, by
condition (2). Hence $(ab)^n\leq (ba)^{n-1}bs(ab)^nt_1$ for some
$t_1\in S$. Then $S$ is a left weakly commutative and
$\pi$-regular ordered semigroup. Hence $S$ is a semilattice of
right $\pi$-$t$-simple ordered semigroups, by Theorem $\ref{4}$.

$(3)\Rightarrow (4)$: Let $a, b, c\in S$ such that $a\rc^*b$. Then
$a^m\rc b^n$, where $m, n$ are the smallest positive integers such
that $a^m, b^n\in Reg_\leq(S)$.  Since $S$ is a semilattice $Y$ of
right $\pi$-$t$-simple ordered semigroups $ \{S_{\alpha}\}_{\alpha
\in Y}$, we have that $ab\lc^* b^ra^r$, by Theorem $\ref{4}$. Also
each $ \{S_{\alpha}\}_{\alpha \in Y}$ is $l$-Archimedean and
$\pi$-regular and hence left $\pi$-regular. So $S$  is left
$\pi$-regular and  thus  $a\lc^* a^2$. Hence

$$a\rc^*a^2, ab\rc^*ba \;\textrm{and} \;ab\rc^* b^ra^r \rc^* a^rb^r
\;\textrm{for} \;\textrm{all} \;r \in
\mathbb{N}.~~~~~~~~~~~~~~~~~~~~~~~~~~~~~~~~~~~~~~~~~(*)$$

Now  $a\rc^*a^2$ implies $a^m\rc a^{2m}$, $a^{2m}\rc a^{4m}$,
$\cdots$ . So $a^{m}\rc a^{2mn}$. Similarly $b^{n}\rc b^{2mn}$.
Therefore $a^{2mn}\rc a^m\rc b^n\rc b^{2mn}$. Since $\rc$ is a
left congruence so $c^{2mn}a^{2mn}\rc c^{2mn}b^{2mn}$. Now we
shall show that $\rc\subseteq \rc^*$. Let $u, v\in S$ such that
$u\rc v$. Then $u\leq vx$ for some $x\in S$. Therefore $u^s\leq
(vx)^s$, where $s$ is the smallest positive integer such that
$u^s\in Reg_\leq(S)$. Let $t$ is the smallest positive integer
such that $(vx)^t \in Reg_\leq(S)$.

Case(1): If $s\geq t$ then $u^s\leq (vx)^s= (vx)^t(vx)^{s-t}\leq
(v^rx^r)^{u_1}s_1(vx)^{s-t}$ for some $s_1\in S$ and for all $r
\in \mathbb{N}$, where $u_1$ is the smallest positive integer such
that $(v^rx^r)^{u_1}\in Reg_\leq(S)$, by (*).

Case(2): If $s< t$ then  there exists positive integer $n$ such
that $ns> t$. Now $u^s\leq u^{2s}z_2\leq u^{3s}z_3\leq \cdots \leq
u^{ns}z_n \leq (vx)^{ns}z_n$. So there exists $s_2\in S$ such that
$u^s\leq (v^rx^r)^{u_1}s_2$. So in either case $u^s\leq v^rw$ for
some $w\in S$ and for all $r \in \mathbb{N}$. Similarly $v^i\leq
u^rz$ for some $z\in S$ and for all $r \in \mathbb{N}$, where $i$
is the smallest positive integer such that $v^i\in Reg_\leq(S)$.
So $u\rc^* v$. Hence $\rc\subseteq \rc^*$.

Therefore $c^{2mn}a^{2mn}\rc^* c^{2mn}b^{2mn}$. Hence $(ac)\rc^*
(c^{2mn}a^{2mn}) \rc^* (c^{2mn}b^{2mn}) \rc^* (bc)$, by (*). So
$ac\rc^* bc$. Also $ca \rc^*ac \rc^*bc \rc^* cb$, by (*). Hence
$\rc^*$ is a semilattice congruence on $S$.

$(4)\Rightarrow (1)$: This is obvious.
\end{proof}

\begin{Corollary}
Let $S$ be a $\pi$-inverse ordered semigroup. Then following
conditions are equivalent:
\begin{enumerate}
\item\vspace{-.4cm}
$\hc^*$ is a congruence on $S$;
\item\vspace{-.4cm}
 $\lc^*=\rc^* =\hc^*$;
 \item\vspace{-.4cm}

$S$ is a semilattice of $\pi$-$t$-simple ordered semigroups;
\item\vspace{-.4cm} $\hc^*$ is a semilattice congruence on $S$.
\end{enumerate}
\end{Corollary}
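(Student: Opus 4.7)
The plan is to reduce the corollary to Theorem~\ref{8} and its left--right dual, both applicable because a $\pi$-inverse ordered semigroup is simultaneously left and right $\pi$-inverse by the preceding corollary. I will run the cycle $(3)\Rightarrow(2)\Rightarrow(4)\Rightarrow(1)\Rightarrow(2)$, with only the last step requiring real work.

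For $(3)\Rightarrow(2)$: a $\pi$-$t$-simple ordered semigroup is both left and right $\pi$-$t$-simple, so any semilattice decomposition of $S$ into $\pi$-$t$-simple components serves simultaneously as a decomposition into right $\pi$-$t$-simple components and into left $\pi$-$t$-simple components. Theorem~\ref{8} $(3)\Rightarrow(2)$ then gives $\lc^*\subseteq\rc^*$ and its left dual gives $\rc^*\subseteq\lc^*$, whence $\lc^*=\rc^*=\hc^*$. For $(2)\Rightarrow(4)$, the containment $\lc^*\subseteq\rc^*$ together with $S$ being right $\pi$-inverse forces, via Theorem~\ref{8}, that $\rc^*$ --- and hence $\hc^*=\rc^*$ --- is a semilattice congruence. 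The implication $(4)\Rightarrow(1)$ is immediate.

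The crux is $(1)\Rightarrow(2)$, which I plan to prove by mimicking the argument of Theorem~\ref{8} $(1)\Rightarrow(2)$ with $\hc^*$ in place of $\rc^*$, exploiting both halves of $\pi$-inverseness. The first ingredient, Lemma~\ref{7}, is already sharper in this setting: its proof first establishes $a'a^m\lc^* b'b^n$ from $\pi$-regularity and then invokes Theorem~\ref{6} for the upgrade to $\rc^*$, while the left dual of Theorem~\ref{6} applied to the idempotents $a'a^m,b'b^n$ in the left $\pi$-inverse semigroup $S$ furnishes the reverse upgrade, yielding $a'a^m\hc^* b'b^n$. I expect the main obstacle to be the chain $a\rc^* e\rc^* ef\rc^* fe\rc^* f\rc^* a'$ from the proof of Theorem~\ref{8}, because $a$ is $\rc^*$-related but not generally $\lc^*$-related to $e=a^m a'$, so the chain does not upgrade to $\hc^*$ verbatim. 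The bypass I have in mind is to work with $c:=a^m a' a^m$ instead of $e$: from $a^m\leq c$ and $c\in(a^m S]\cap(Sa^m]$ one checks $L(a^m)=L(c)$ and $R(a^m)=R(c)$, so $a\hc^* c$. Running the analogous chain through $c$ (and $d:=b^n b' b^n$ for $b$), propagating products using $\hc^*$-congruence and controlling products of idempotents via both halves of Theorem~\ref{5}, should deliver $a\hc^* b$ and hence $\lc^*\subseteq\hc^*$. The symmetric argument starting from $a\rc^* b$ gives $\rc^*\subseteq\hc^*$, and together they yield (2).
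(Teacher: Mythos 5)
The paper states this corollary with no proof at all, so the only question is whether your reduction to Theorem~\ref{8} and its left--right dual is actually complete; it is not, on two counts. First, your implication scheme $(3)\Rightarrow(2)\Rightarrow(4)\Rightarrow(1)\Rightarrow(2)$ contains no arrow \emph{into} $(3)$: it establishes $(1)\Leftrightarrow(2)\Leftrightarrow(4)$ and that $(3)$ implies these, but never that anything implies $(3)$. Closing the loop needs something like $(2)\Rightarrow(3)$: from $\lc^*=\rc^*$, Theorem~\ref{8} and its dual give one semilattice decomposition into right $\pi$-$t$-simple components and another into left $\pi$-$t$-simple components, and you must still argue that a \emph{single} decomposition can be chosen whose components are simultaneously left and right $\pi$-$t$-simple, hence $\pi$-$t$-simple (e.g.\ via the greatest semilattice decomposition into Archimedean components that underlies Theorem~\ref{4}(4) and its dual). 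That is a real missing step, not bookkeeping.

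Second, your crux $(1)\Rightarrow(2)$ is a plan rather than a proof, and the proposed bypass does not visibly close the gap you yourself identify. In Theorem~\ref{8} the congruence hypothesis is cashed in exactly once, to get $e\rc^* ef$ by multiplying the nontrivial seed relation $a\rc^* e$ (with $e=a^ma'$, $f=a'a^m$); there is no $\hc^*$-analogue because $a$ is $\hc^*$-related to neither $e$ nor $f$. Your substitute seed $a\hc^* c$ with $c=a^ma'a^m$ is correct but weak: multiplying it by $a'$, $e$ or $f$ on either side yields only relations such as $aa'\hc^* e$, $a'a\hc^* f$, $e\hc^* e^2$, $c\hc^* cf$, none of which connects $e$ to $f$, i.e.\ none delivers the needed $a^m\rc a'a^m$. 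The phrase ``should deliver $a\hc^* b$'' is precisely where the proof has to happen, and nothing in the sketch does it; until you exhibit a concrete chain ending in $a\rc^* a'a^m$ (or show directly that $\hc^*$ being a congruence forces $\rc^*$ and $\lc^*$ to be congruences, so that Theorem~\ref{8}(1) and its dual apply), the implication $(1)\Rightarrow(2)$ remains unproved.
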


The following corollary which follows from  Theorem $\ref{4}$ and
Theorem $\ref{8}$ gives a characterization on right $\pi$-inverse
ordered semigroup to become a completely $\pi$-regular ordered
semigroup.
\begin{Corollary}
Let $S$ be a right $\pi$-inverse and left $\pi$-regular ordered
semigroup. Then following conditions are equivalent.
\begin{enumerate}
\item\vspace{-.4cm}
$\rc^*$ is a congruence on $S$;
\item\vspace{-.4cm}
$\lc^*\subseteq \rc^*$;
\item\vspace{-.4cm}
$S$ is a semilattice of right $\pi$-$t$-simple ordered semigroups;
\item\vspace{-.4cm}
$S$ is completely $\pi$-regular and left
weakly commutative.
\end{enumerate}
\end{Corollary}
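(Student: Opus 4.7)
The plan is to leverage Theorem \ref{8} for the bulk of the equivalences and to use the extra hypothesis of left $\pi$-regularity only where it is genuinely needed. Since $S$ is right $\pi$-inverse, the chain $(1) \Leftrightarrow (2) \Leftrightarrow (3)$ follows verbatim from Theorem \ref{8}; the left $\pi$-regularity hypothesis plays no role in this step. It therefore suffices to establish the equivalence $(3) \Leftrightarrow (4)$.

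For $(3) \Rightarrow (4)$, the direction $(1) \Rightarrow (3)$ of the dual of Theorem \ref{4} immediately yields that $S$ is $\pi$-regular and left weakly commutative, so only complete $\pi$-regularity requires real work. The proof of Theorem \ref{8} $(2) \Rightarrow (3)$ already shows that right $\pi$-inverseness together with $\lc^* \subseteq \rc^*$ forces $S$ to be right $\pi$-regular. Combining this with the hypothesis of left $\pi$-regularity, for a given $a \in S$ I would choose a common exponent $m$ and elements $s, t \in S$ with $a^m \leq s a^{2m}$ and $a^m \leq a^{2m} t$. Iterating each bound once produces the amplified versions $a^{2m} \leq s^2 a^{4m}$ and $a^{2m} \leq a^{4m} t^2$, while direct multiplication of the two original inequalities produces the ``middle'' bound $a^{2m} \leq a^{2m}(ts) a^{2m}$. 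Substituting the amplified bounds into the two outer factors of this middle bound collapses to $a^{2m} \leq a^{4m}(t^3 s^3) a^{4m}$, which is precisely the required witness of complete $\pi$-regularity at exponent $n = 2m$.

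For $(4) \Rightarrow (3)$, complete $\pi$-regularity trivially entails $\pi$-regularity, and together with left weakly commutativity this is exactly the hypothesis of the direction $(3) \Rightarrow (1)$ of the dual of Theorem \ref{4}, which returns the desired semilattice decomposition of $S$ into right $\pi$-$t$-simple ordered semigroups.

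The main obstacle I anticipate is the bookkeeping in passing from the two one-sided $\pi$-regularities to the two-sided form demanded by complete $\pi$-regularity. A naive single-step multiplication of $a^m \leq sa^{2m}$ and $a^m \leq a^{2m}t$ yields only $a^{2m} \leq a^{2m}(ts) a^{2m}$, which is of the form $a^n \leq a^n u a^n$ rather than the required $a^n \leq a^{2n} u a^{2n}$. The key is to amplify each one-sided inequality first, so that the powers on the ``correct'' side reach $a^{4m}$ before recombining; only then can both outer factors be upgraded simultaneously without stray $s$'s or $t$'s leaking out past the outer $a$-blocks.
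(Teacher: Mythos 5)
Your proposal is correct and follows exactly the route the paper intends: the paper offers no written proof, stating only that the corollary ``follows from Theorem \ref{4} and Theorem \ref{8}'', and your argument is precisely that derivation --- $(1)\Leftrightarrow(2)\Leftrightarrow(3)$ from Theorem \ref{8}, and $(3)\Leftrightarrow(4)$ from the dual of Theorem \ref{4} together with the observation that right $\pi$-regularity (extracted from the proof of Theorem \ref{8}) combines with the hypothesized left $\pi$-regularity to give complete $\pi$-regularity. Your explicit computation $a^{2m}\leq a^{2m}(ts)a^{2m}\leq a^{4m}t^{3}s^{3}a^{4m}$ is the one detail the paper leaves unstated, and it checks out (the passage to a common exponent $m$ is routine).
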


\bibliographystyle{plain}

\begin{thebibliography}{10}
\baselineskip 5mm


\bibitem{bh1}
A. K. Bhuniya and K. Hansda, {On completely regular and Clifford
ordered semigroups}, \emph{Afr. Mat.}, \textbf{31}(2020),
1029-1045. doi:10.1007/s13370-020-00778-1



\bibitem{Cao 2000}
Y. Cao and X. Xinzhai, {Nil-extensions of simple po-semigroups},
\emph{Communication in Algebra}, \textbf{28(5)}(2000), 2477-2496.



\bibitem{HJ}
K. Hansda and A. Jamadar, {Characterization of inverse ordered
semigroups by their ordered idempotents and bi-ideals},
\emph{Quasigroups and Related Systems}, \textbf{28}(2020), 77-88.





\bibitem{JH}
A. Jamadar and K. Hansda, { On Right inverse ordered semigroups},
\emph{Discussiones Mathematicae- General Algebra and Applications},
\textbf{43(1)}(2024), 75-83.

\bibitem{AJ}
A. Jamadar, {$\pi$-inverse ordered semigroups}, \emph{Discussiones
Mathematicae- General Algebra and Applications},
\textbf{44(1)}(2024), 5-13.



\bibitem{Ke2006}
N. Kehayopulu, {Ideals and Green's relations in ordered
semigroups}, \emph{International Journal of Mathematics and
Mathematical Sciences }, \textbf{}(2006), 1-8, Article ID 61286.



\bibitem{ke92}
N. Kehayopulu,   On completely regular poe-semigroups, \emph{Math.
Japonica} \textbf{37}(1992), 123-130.

\bibitem{ke 2008}
N. Kehayopulu and M. Tsingelis,  {Semilattices of Archimedean
ordered semigroups}, \emph{Algera Colloquium} \textbf{15:3}(2008),
527-540.



\bibitem{SH}
S. Sadhya and K. Hansda, {Characterizations of $\pi$-$t$-simple
ordered semigroups by their ordered idempotents},
\emph{Quasigroups and Related Systems}, \textbf{27}(2019),
119-126.

\bibitem{SH1}
S. Sadhya and K. Hansda, { On Green's relations in  GV ordered
semigroups}, Communicated.


\end{thebibliography}

\end{document}